\numberwithin{figure}{section}
\newtheorem{theorem}{Theorem}[subsection]
\newtheorem{lemma}{Lemma}[subsection]
\newtheorem{example}{Example}[subsection]
\newtheorem{definition}{Definition}[subsection]
\begin{document}

\title{Edge chromatic index and edge-sum chromatic  index  for  families of integral sum graphs}
\author{Priyanka B R$^a$\thanks{$^{a}$priyankabr999@gmail.com},  Biswajit Pandit$^b$ \thanks{$^b$ biswajitpandit82@gmail.com } Rajeshwari M.$^c$ \thanks{$^b$ rajeshwarim@presidencyuniversity.in } 
\\\small{\textit{$^{a,c}$Department of Mathematics, Presidency University Bangalore, $560064$, India.}}
}
\date{\today}

\maketitle

\begin{abstract}
 We consider class of integral sum graphs    $H^{-i,s}_{m,j}$ subject to the conditions     $-i<0<s$, $1\leq m < i$ and $1\leq j < s$ for all $i,s, m,j\in \mathbb{N}$. We apply  edge-sum coloring and edge coloring on    $H^{-i,s}_{m,j}$. Since the graphs fully depend on $i$ and $s$, therefore it is not easy to derive the theoretical as well as numerical results  for all values of $i$ and $s$. Here, we derive the general formula for computing the minimum number of independent color classes.   We compute the edge chromatic as well as  edge-sum chromatic number of   $H^{-i,s}_{m,j}$ corresponding to  different values of $i$, $s$, $m$ and $j$.  We also compare these two techniques. We place the numerical results to verify the theoretical results. 
\end{abstract}
\textit{Keywords:} Edge Coloring; Edge Sum Coloring; Integral Sum Graph; Graph Coloring. \\
\textit{AMS Subject Classification:} 05C15; 05C35.

\section{Introduction}
A graph $G$ comprises a finite nonempty set $V$ of objects which are called vertices and a set $E$ which are called edges.  The order refers to the total number of vertices and the size refers to the total number of edges of a graph. The total number of adjacent edges in a graph corresponding to the vertex is said to be the degree of that vertex  $v\in V(G)$. The highest degree among the degree of vertices of $G$ is denoted by $\Delta(G)$.  

There are two types of graphs:  directed graphs and undirected graphs. In undirected graphs, the edges do not have any direction. Here, we will focus on undirected families of integral sum graphs.  The sum graph (\citep{Hararay1990, Hararay1969, Hararay1994, TNSSVV2001}) $G^{+}(S)$ of a finite subset $S\subset \mathbb{N}=\lbrace 1,2,\cdots \rbrace$ is the graph $(V, E)$, where $V=S$ and $uv\in E$ if and only if $u+v \in S.$ If we consider $S=\lbrace 1,2,\cdots, n \rbrace$, then the sum graph $G^{+}(S)$ is denoted by $G_{n}$.  An integral sum graph (\citep{chen2006, Xu1994, WU2003, MP2002})  $G^{+}(S)$ is defined  as the sum graph along with the range  $S\subset \mathbb{Z}$.  If we consider  $S=\left[ -i,s\right]=\lbrace -i, -(i+1),\cdots,0,\cdots, s-1, s \rbrace$ for all $i,~s\in \mathbb{N}$, then we denote the  integral sum graph as $G_{-i,s}$.   Many authors (\citep{VVLMF2013, LWBSMH2021})  introduced the various properties of different types of integral sum graphs. 
 
Now, we denote (\citep{VVLMF2013}) the edge-sum color class of integral sum graphs as $\left[e_{k}\right]$ corresponding to  edge-sum label $k ~\in S $. We define the vertex $u_{k}$ whose label is $k$ in $G_{-i,s}$. In the following,  we  define  the  graphs
\begin{eqnarray}
\label{paper2eq3}
H_{m,j}^{-i,s}=G_{-i,s} \backslash \left(\lbrace u_{-m}, u_{j}\rbrace \cup \left[e_{-m}\cup e_{j}\right] \right),
\end{eqnarray}
 such that  $1\leq m < i$ and $1 \leq j < s$ for all $i,s, m,j\in \mathbb{N}$. The graphs in   (\ref{paper2eq3}) are obtained from the integral sum graphs $G_{-i,s}$ by removing one vertex and corresponding to its  edge sum color class.    Therefore, we can see 	   $H_{m,j}^{-i,s}$ are the integral sum graphs having vertices   $[-i,s]\backslash \lbrace -m, j  \rbrace$  for all $1\leq m < i $ and $1 \leq j < s$.

Based on above motivations, here we  apply  edge-sum coloring and edge coloring on a class of  integral sum graphs   $H_{m,j}^{-i,s}$ where $i,s, m,j\in \mathbb{N}$ such that $-i<0<s$, $1\leq m<i$ and $1\leq j <s$. Since the graphs fully depend on $i$ and $s$, therefore it is not easy to determine the minimum number of color classes for all values of $i$ and $s$. Here, we derive the general formula for computing the minimum number of independent color classes. We compute the edge chromatic as well as an edge-sum chromatic number to compare these two techniques. We place the numerical results to verify the theoretical result.  To the best of our knowledge, there are no works in the literature.  Below, we discuss the possible related works in the literature corresponding to our methodology.

In $1970$, Folkman (\citep{JF1970}) derived some properties of monochromatic complete graphs with the help of edge coloring.  After that, Baranyai (\citep{ZB1979}) studied the edge-coloring and factorization problems of different types of hypergraphs such as regular, complete,  $r$-partite, and $h$-uniform hypergraphs having $m$ vertices. Holyer (\citep{IH1981}) computed the chromatic index of the NP-complete problem by applying edge coloring.   In the subsequent year,    Cole et al. (\citep{CH1982}) developed an algorithm based on edge coloring to compute the different properties of a bipartite graph. Later Goldberg (\citep{MKG1984}) considered the multigraphs to apply the edge coloring technique.    Consecutively, a generalization of edge-coloring based on the multi-graphs was developed by Hakimi (\citep{SLH1986}).   Panconesi et al. (\citep{APAS1997}) developed fast and simple randomized algorithms for edge coloring problems that arise in a real life.  Kapoor (\citep{AKRR1999}) studied the edge coloring of a bipartite graph having  $m$ edges and $n$ nodes. Some network problems could also be converted to the strong edge coloring and adjacent strong edge coloring problem,  which was developed by Zhang et al. (\citep{ZZLLJW2002}).

We organize the remaining work in the following sections. In section $2$, we have stated  some basic preliminaries  which help us to generalize the results.  In section \ref{section5}, we have computed the edge chromatic index of $H^{-i,s}_{m,\phi}$. We have placed the numerical results in section $4$. In section $5$, we conclude the work.

\section{Preliminaries}
Harary (\citep{Hararay1969, Hararay1990}) introduced the families of sum graphs $G_{n} $ and families of integral sum graphs $G_{-i,s}$ for all $i,~s\in \mathbb{Z}$.  Recenlty, Vilfred et al. (\citep{VVLMF2013}), derived some properties of   $H_{m,j}^{-i,s}$.  Now, we state some useful lemmas without proof which help us to apply edge coloring and edge-sum coloring.
\begin{theorem}(\citep{VLMF2012})
Let $u_{i}$ denotes the vertex whose label is $i$ in the sum graph $G_{n}$, then 
\begin{equation}
\mbox{deg($i$)} = 
\left\{
    \begin{array}{lr}
        n-i-1, & \text{if } 1\leq i \leq \left\lfloor \frac{n}{2}\right\rfloor,\\
        n-i, & \text{if } \left\lfloor \frac{n}{2}\right\rfloor < i\leq n.
    \end{array}
\right.
\end{equation}
\end{theorem}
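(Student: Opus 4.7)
The plan is to unfold the definition of $G_n$ directly and count neighbors of $u_i$ by a short case analysis.

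First I would recall that in $G_n = G^{+}(\{1,2,\dots,n\})$, two distinct vertices $u_i$ and $u_j$ are adjacent if and only if $i+j \in \{1,2,\dots,n\}$. Since $i,j \geq 1$, the lower bound $i+j \geq 1$ is automatic, so the condition simplifies to $i+j \leq n$, i.e.\ $j \leq n-i$. Combined with $j \in \{1,\dots,n\}$ and $j \neq i$ (no loops), the neighbor set of $u_i$ is exactly
\begin{equation*}
N(u_i) \;=\; \bigl\{\,u_j : j \in \{1,2,\dots,n-i\},\ j \neq i\,\bigr\}.
\end{equation*}

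Next I would determine whether the excluded value $j=i$ actually lies in the range $\{1,\dots,n-i\}$, which happens exactly when $i \leq n-i$, equivalently $i \leq \lfloor n/2 \rfloor$. In that case one element must be removed from the interval, giving $\deg(u_i) = (n-i) - 1 = n - i - 1$. If instead $i > \lfloor n/2 \rfloor$, then $i > n-i$, so $i$ is not in the counted interval at all, and therefore $\deg(u_i) = n - i$.

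Finally I would check the boundary, since the split at $\lfloor n/2 \rfloor$ treats even and odd $n$ slightly differently (for even $n$, the vertex $u_{n/2}$ satisfies $i = n-i$ and would produce a loop that we discard; for odd $n$, strict inequality $i < n-i$ holds at $i = \lfloor n/2 \rfloor$). Both sub-cases are absorbed cleanly by the inequality $i \leq \lfloor n/2 \rfloor$ used above, so the two-line formula follows. The only mildly delicate step is the loop exclusion at $i = n/2$; beyond that the argument is a direct count, so I do not anticipate a substantive obstacle.
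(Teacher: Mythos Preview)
Your argument is correct and is the standard direct count: identify $N(u_i)=\{u_j:1\le j\le n-i,\ j\neq i\}$ and split on whether $i\le\lfloor n/2\rfloor$ to decide if the excluded index $i$ actually falls in that range. The paper itself does not supply a proof of this theorem; it is quoted without proof as a preliminary result from \cite{VLMF2012}, so there is nothing further to compare against.
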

\begin{theorem}(\cite{VLMF2012})\label{theorem12451}
Let $u_{i}$ denotes the vertex whose label is $i$ in the integral  sum graph $G_{r,s}$ such that $r<0<s$, then 
\begin{equation}
\mbox{deg($i$)} = 
\left\{
    \begin{array}{lr}
        n-i-1, & \text{if } 1\leq i \leq \left\lfloor \frac{s}{2}\right\rfloor,\\
        n-i, & \text{if } \left\lfloor \frac{s}{2}\right\rfloor < i\leq s,\\
        n-1, & \text{if } i=0,\\
        n+i-1, & \text{if } 1\leq i \leq \left\lfloor \frac{-r}{2}\right\rfloor,\\
         n+i, & \text{if } \left\lfloor \frac{-r}{2}\right\rfloor < i\leq -r.
    \end{array}
\right.
\end{equation}
\end{theorem}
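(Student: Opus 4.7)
The plan is to count, for each label $v \in [r,s]$, the number of other labels $u \in [r,s]$ that satisfy the sum-graph edge condition $u + v \in [r,s]$; this count is exactly $\deg(v)$ by the definition of the integral sum graph. Rewriting the condition as $u \in [r - v,\, s - v]$ turns the neighborhood enumeration into a lattice point count in the intersection $[r,s] \cap [r-v,\, s-v]$, from which I would subtract one if $v$ itself happens to lie in that intersection (since $G_{r,s}$ has no loops).

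I would then split on the sign of $v$. For $v = 0$ the two intervals coincide and, after removing the vertex itself, the degree is $n - 1$, which is the middle case. For $v > 0$ the shifted interval lies to the left of $[r,s]$, so the intersection is $[r,\, s-v]$, whose cardinality is $s - v - r + 1 = n - v$. I would then check whether $v$ belongs to $[r,\, s-v]$; this is equivalent to $2v \leq s$, i.e., $v \leq \lfloor s/2 \rfloor$. Subtracting one accordingly yields the two positive-label formulas $n - v - 1$ and $n - v$. For $v < 0$, writing $v = -k$ with $k > 0$ and performing the mirror-image computation gives intersection $[r+k,\, s]$ of size $n - k$, with $-k$ belonging to this set exactly when $k \leq \lfloor -r/2 \rfloor$; reading the resulting degree as a function of the signed label matches the expressions $n + i - 1$ and $n + i$ in the last two branches of the theorem.

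The main point that needs care is verifying that the intervals are non-empty and that the boundary thresholds are handled correctly. Because $-r$ and $s$ are each at most $n - 1$, one has $r \leq s - v$ whenever $0 < v \leq s$ and $r + k \leq s$ whenever $0 < k \leq -r$, so the cardinality formulas above remain valid throughout the ranges claimed in the theorem. The thresholds $\lfloor s/2 \rfloor$ and $\lfloor -r/2 \rfloor$ arise purely from the inequality $v \leq s - v$ (respectively $-k \geq r + k$), with the floor enforcing integrality of the cutoff. I do not anticipate any genuinely hard step; the whole argument is essentially a careful bookkeeping of interval intersections, and the five-case piecewise formula falls out immediately.
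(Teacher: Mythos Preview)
The paper does not supply its own proof of this theorem: it is stated in the Preliminaries section as a result quoted from \cite{VLMF2012}, with no accompanying argument. Consequently there is no in-paper proof to compare against.

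Your proposal is correct and complete. The interval-intersection count
\[
\bigl|\,[r,s]\cap[r-v,\,s-v]\,\bigr|\;-\;\mathbf{1}_{\{v\in[r-v,\,s-v]\}}
\]
is exactly the right way to obtain $\deg(v)$, and your case split on the sign of $v$ together with the threshold analysis $2v\leq s$ (respectively $2k\leq -r$) recovers all five branches. The non-emptiness checks you flag are handled by the hypothesis $r<0<s$, exactly as you indicate.

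One remark worth making explicit in a written-up version: the theorem as stated in the paper has an internal notational inconsistency in its last two branches, where the range condition ``$1\leq i\leq\lfloor -r/2\rfloor$'' treats $i$ as the absolute value of a negative label, while the degree expression $n+i$ treats $i$ as the signed (negative) label itself. You navigate this correctly by working with $v=-k$, $k>0$, obtaining $\deg(-k)=n-k-1$ or $n-k$ according as $k\leq\lfloor -r/2\rfloor$ or not, and then observing that $n-k=n+v$. It would strengthen the write-up to state this identification explicitly so that the reader is not left to reconcile the two uses of $i$ on their own.
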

\begin{theorem}(\citep{VLMF2012})\label{paper2theorem103}
Let $|r|+s \geq 3$, then the number of edges in the integral sum graph $G_{r,s}$ is 
\begin{equation}
E\left(G_{r,s}\right)=\frac{1}{4}\left(r^{2}+s^{2}-3r+3s-4rs\right)-\frac{1}{2}\left(\left\lfloor\frac{|r|}{2}\right\rfloor+\left\lfloor\frac{s}{2}\right\rfloor \right).
\end{equation}
\end{theorem}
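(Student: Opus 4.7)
The plan is to apply the handshake identity $2|E(G_{r,s})| = \sum_{v \in V}\deg(v)$ together with the piecewise degree formula of Theorem~\ref{theorem12451}, and then collapse the resulting sum by routine algebra.

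First I would set $n := s + |r| + 1 = s - r + 1$, the order of $G_{r,s}$. Each of the five ranges in Theorem~\ref{theorem12451} contributes an arithmetic sum that is elementary to evaluate: the positive side gives
\begin{equation*}
\sum_{i=1}^{s}\deg(i) \;=\; \sum_{i=1}^{s}(n-i) \;-\; \Bigl\lfloor \tfrac{s}{2}\Bigr\rfloor \;=\; sn - \tfrac{s(s+1)}{2} - \Bigl\lfloor \tfrac{s}{2}\Bigr\rfloor,
\end{equation*}
where the floor term accounts for the extra $-1$ on the small range $1 \le i \le \lfloor s/2 \rfloor$. By the obvious symmetry between positive and negative labels, the negative side contributes $|r|n - \tfrac{|r|(|r|+1)}{2} - \lfloor |r|/2 \rfloor$, and the zero vertex contributes $n-1$. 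Adding these three pieces gives
\begin{equation*}
\sum_{v}\deg(v) \;=\; (s+|r|+1)\,n \;-\; 1 \;-\; \tfrac{s(s+1)+|r|(|r|+1)}{2} \;-\; \Bigl\lfloor \tfrac{s}{2}\Bigr\rfloor \;-\; \Bigl\lfloor \tfrac{|r|}{2}\Bigr\rfloor.
\end{equation*}

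Since $(s+|r|+1)n = n^{2}$ and $n^{2}-1 = s^{2}+|r|^{2}+2s|r|+2s+2|r|$, the non-floor part collapses to $\tfrac{1}{2}\bigl(s^{2}+|r|^{2}+4s|r|+3s+3|r|\bigr)$. Dividing by $2$ and substituting $|r| = -r$ (valid because $r<0$) converts $4s|r|+3|r|$ into $-4rs-3r$, yielding exactly
\begin{equation*}
|E(G_{r,s})| \;=\; \tfrac{1}{4}\bigl(r^{2}+s^{2}-3r+3s-4rs\bigr) \;-\; \tfrac{1}{2}\Bigl(\Bigl\lfloor \tfrac{|r|}{2}\Bigr\rfloor + \Bigl\lfloor \tfrac{s}{2}\Bigr\rfloor\Bigr),
\end{equation*}
as claimed.

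I do not foresee a real obstacle: the argument is a one-line double-counting identity padded out by careful bookkeeping of the floor corrections. The place that needs attention is the $-1$ adjustment in the degree of each small-index vertex (it must be tallied once per applicable vertex, not per edge), together with the $|r|\leftrightarrow -r$ conversion in the polynomial part. The hypothesis $|r|+s \ge 3$ is used only to rule out the two tiny boundary cases ($|r|=s=1$, etc.) in which some of the five ranges of Theorem~\ref{theorem12451} would be empty or collide, so that every vertex falls unambiguously into exactly one case and the piecewise summation is valid as written.
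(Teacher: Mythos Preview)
Your argument is correct: the handshake identity together with the piecewise degree formula of Theorem~\ref{theorem12451} gives the edge count after the routine algebra you outline, and your bookkeeping of the $\lfloor s/2\rfloor$ and $\lfloor |r|/2\rfloor$ correction terms is right.

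There is nothing to compare against, however, because the paper does not prove this theorem at all. It is quoted in the Preliminaries as a known result from Vilfred and Florida~\citep{VLMF2012}, explicitly prefaced by ``we state some useful lemmas without proof.'' Your derivation via double counting of degrees is the natural one and is presumably close in spirit to what the original reference does, since Theorem~\ref{theorem12451} (the degree formula) is drawn from the same source and is the obvious input. One small quibble: your remark that the hypothesis $|r|+s\ge 3$ is there to prevent the five degree ranges from ``colliding'' is not quite the point---empty ranges in the summation would simply contribute zero and cause no trouble. The constraint is more plausibly inherited from the validity range of Theorem~\ref{theorem12451} itself (or to exclude degenerate graphs on one or two vertices), but this does not affect the soundness of your computation.
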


\begin{definition}
Edge-sum chromatic  number is defined as the minimal number of  non-empty independent edge-sum  color classes of a integral sum graphs $G(s)$,  and   is represented by $\chi'_{\mathbb{Z}-sum}\left(G(s)  \right)$.
\end{definition}
\begin{definition}
Edge chromatic  number is defined as the minimal number of  non-empty independent edge color classes of integral sum graph $G(s)$, and   is represented by $\chi'\left( G(s) \right)$.
\end{definition}
\begin{definition}\label{paper2def3}
Let $G(s)$  be the integral sum graph. It  is called perfect edge sum color graph if $\chi'_{\mathbb{Z}-sum}\left( G(s)\right)$ is equal to $\chi'\left(G(s) \right)$.    Otherwise, $G(s)$ is said to be  non-perfect edge sum color graph.
\end{definition}
\begin{example}
$\chi'_{Z-sum}\left( H_{1,2}^{-4,3} \right)=6$ and $\chi'\left( H_{1,2}^{-4,3} \right)=5.$ The integral sum graphs $H_{1,2}^{-4,3}$ is called the non-perfect edge sum color graphs.
\end{example}
In the following, we derive results regarding to chromatic index of different types of integral sum graphs  $H^{-i,s}_{m,j}$.

\section{Chromatic index of $H^{-i,s}_{m,j}$}\label{section5}
Here, we compute the chormatic index for the graph $H^{-i,s}_{m,j}$.  
\begin{lemma}\label{2paper1lemma5000}
The edge chromatic number of  $H_{1,1}^{-2,s}$ for $s=3,\cdots,6$, $H_{1,2}^{-2,s}$ for $s=3,\cdots,8$ and  $H_{1,3}^{-2,s}$ for $s=3,\cdots,10$ is $s.$  
\end{lemma}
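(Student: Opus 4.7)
The plan is to squeeze the chromatic index between $s$ and $s$ in the standard way: a lower bound coming from the maximum degree and a matching upper bound from an explicit proper $s$-edge-coloring. Since only $18$ specific triples $(m,j,s)$ appear in the statement, a uniform recipe together with case checking should suffice.

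For the lower bound, the key observation is that in $G_{-2,s}$ the vertex $u_0$ is adjacent to every other vertex, because $0+k=k\in[-2,s]$ for every label $k$ present. Theorem \ref{theorem12451} then gives $\deg_{G_{-2,s}}(u_0)=s+2$. In passing from $G_{-2,s}$ to $H_{m,j}^{-2,s}$ we delete the two vertices $u_{-m}$ and $u_j$, and we delete every edge whose endpoints sum to $-m$ or to $j$. Among the edges incident to $u_0$, the only edges with sum $-m$ or $j$ are $u_0u_{-m}$ and $u_0u_j$, which are already removed with their endpoints; hence the color-class deletion costs $u_0$ no extra incidences. Thus $\deg_{H_{m,j}^{-2,s}}(u_0)=s$, and so $\chi'(H_{m,j}^{-2,s})\geq\Delta(H_{m,j}^{-2,s})\geq s$. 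A short supplementary check using Theorem \ref{theorem12451}, combined with the extra edges removed in $[e_{-m}]\cup[e_j]$ at each of $u_1$, $u_{-2}$, and the other vertices of near-maximum degree, shows that in all stated ranges no other vertex reaches degree $s+1$, so $\Delta=s$ and Vizing's inequality yields the crude upper bound $\chi'\leq s+1$.

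For the matching upper bound I would construct, for each of the listed triples, an explicit proper $s$-edge-coloring. The recipe is uniform: start from the edge list of $G_{-2,s}$ (its size given by Theorem \ref{paper2theorem103}), delete the edges in $[e_{-m}]\cup[e_j]$ and the edges at $u_{-m}$ and $u_j$; assign the colors $1,2,\ldots,s$ to the $s$ edges at $u_0$ in any convenient order (for instance, color $u_0u_k$ by $k$ suitably shifted so that the range is $\{1,\ldots,s\}$); then extend greedily to the surviving edges among the non-zero vertices. Outside of $u_0$, every surviving vertex has degree at most roughly $s/2$, and the induced subgraph on $\{u_{-2}\}\cup\{u_i : i>0\}\setminus\{u_j\}$ is sparse in the small cases at hand, so a greedy extension succeeds; equivalently, the induced graph on non-zero vertices is Class~1 with $\chi'$ well below $s$, so palette collisions can always be avoided.

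The principal obstacle is the combinatorial bookkeeping in the upper-bound step: one must verify that the greedy extension actually closes in each of the $18$ configurations, with no hidden parity or odd-cycle obstruction forcing a Vizing-type jump to $s+1$. This is where the stated upper limits on $s$ (namely $s\leq 6, 8, 10$ for $j=1,2,3$ respectively) enter, since for larger $s$ the subgraph on non-zero vertices becomes dense enough that a naive extension no longer closes. To keep the presentation clean I would group the cases by $j\in\{1,2,3\}$, within each group induct on $s$ by inserting the new high-end vertex $u_s$ together with its at most $\lfloor s/2\rfloor+1$ new edges, and exhibit the short recoloring that absorbs them; a small table of colorings for the base cases $s=3$ in each family would then make the verification transparent.
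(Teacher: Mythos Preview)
Your approach is essentially the same as the paper's: a lower bound of $s$ from $\Delta$ (witnessed by $u_0$) together with an explicit proper $s$-edge-coloring for each of the finitely many listed graphs. The paper's own proof is in fact considerably terser than your outline---it simply exhibits a $3$-coloring for each of the three base cases $H_{1,1}^{-2,3}$, $H_{1,2}^{-2,3}$, $H_{1,3}^{-2,3}$, asserts that these are optimal, and then writes ``Similarly, we can show\ldots'' for the remaining fifteen cases, without stating the $\Delta$-argument for the lower bound or giving any recipe for the larger $s$; your degree computation at $u_0$ and your plan to organise the upper-bound constructions by $j$ and by inserting $u_s$ inductively are more explicit than anything in the paper.
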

\begin{proof}
The minimum number of edge color classes of $H_{1,1}^{-2,3}$, $H_{1,2}^{-2,3}$ and $H_{1,3}^{-2,3}$ are 
$\lbrace \lbrace \left[-2,0\right] \rbrace$, $\lbrace \left[-2,2\right], \left[0,3\right] \rbrace$,  $\lbrace \left[0,3\right] \rbrace \rbrace$, $\lbrace \lbrace \left[-2,0\right] \rbrace$, $\lbrace \left[-2,3\right], \left[0,1\right] \rbrace$,  $\lbrace \left[0,3\right] \rbrace \rbrace$ and $\lbrace \lbrace \left[-2,0\right] \rbrace$, $\lbrace \left[-2,2\right], \left[0,1\right] \rbrace$,  $\lbrace \left[0,2\right] \rbrace \rbrace$, respectively. Hence, edge chormatic number is $3$. Similarly, we can show that the edge chromatic index of $H_{1,1}^{-2,s}$ for $s=4,\cdots,6$, $H_{1,2}^{-2,s}$ for $s=4,\cdots,8$ and  $H_{1,3}^{-2,s}$ for $s=4,\cdots,10$ is $s.$  
\end{proof}
In view of Lemma \ref{2paper1lemma5000}, we have proved the following three lemmas.
\begin{lemma}\label{00paper1lemma500}
The edge chromatic number of  $H_{m,1}^{-3,s}$   for $s=4,\cdots,7$, $H_{m,2}^{-3,s}$  for $s=4,\cdots,9$ and  $H_{m,3}^{-3,s}$   for $s=4,\cdots,11$ is $s+1$ for $m=1,2$.  
\end{lemma}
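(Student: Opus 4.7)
The plan is to mirror the strategy used for Lemma \ref{2paper1lemma5000}: establish matching lower and upper bounds on $\chi'(H_{m,j}^{-3,s})$, where the lower bound comes from the maximum degree at the central vertex $u_0$ and the upper bound from an explicit construction of $s+1$ edge color classes, carried out for each admissible triple $(m,j,s)$.

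For the lower bound, I would use Theorem \ref{theorem12451} to read off that in $G_{-3,s}$ with $n=s+4$ vertices, $\deg(u_0)=n-1=s+3$. Passing to $H_{m,j}^{-3,s}$ removes the vertices $u_{-m}$ and $u_j$, which strips exactly the two edges $\{u_0,u_{-m}\}$ and $\{u_0,u_j\}$ from the star at $u_0$. Observe that the only edge at $u_0$ whose edge-sum equals $-m$ (respectively $j$) is precisely $\{u_0,u_{-m}\}$ (respectively $\{u_0,u_j\}$), already deleted; so the subsequent removal of $[e_{-m}]\cup[e_j]$ costs $u_0$ no further edge. Hence $\deg_H(u_0)=s+1$, and $\chi'(H_{m,j}^{-3,s})\ge \Delta(H_{m,j}^{-3,s})\ge s+1$.

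For the upper bound I would produce a concrete proper edge coloring using $s+1$ colors. The $s+1$ edges incident to $u_0$ are pairwise adjacent and therefore force $s+1$ distinct colors; I would assign them colors $1,\dots,s+1$ and then extend. For each non-central vertex $v$, repeated application of Theorem \ref{theorem12451} combined with the degree bookkeeping for the four sources of edge loss (the two vertex deletions $u_{-m},u_j$ and the two edge-sum class deletions $[e_{-m}],[e_j]$) yields $\deg_H(v)\le s$, so at least one of the $s+1$ colors remains available at every such endpoint. A greedy extension, scheduling the remaining edges by traversing the surviving edge-sum classes $[e_k]$ (each of which is itself a matching in $H_{m,j}^{-3,s}$), should then complete the coloring.

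The main obstacle is the explicit case analysis: since the bound is tight at $u_0$, the coloring has no slack there, and one must verify for each triple $(m,j,s)$ with $m\in\{1,2\}$ and $s$ in the ranges $\{4,\dots,7\}$, $\{4,\dots,9\}$, $\{4,\dots,11\}$ for $j=1,2,3$ respectively, that the greedy extension never gets stuck. I would present the $s+1$ color classes as explicit lists of edges labelled by their edge-sums, in the style of Lemma \ref{2paper1lemma5000}, and then confirm by inspection that each class is a matching and that the lists exhaust $E(H_{m,j}^{-3,s})$.
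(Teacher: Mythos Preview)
Your proposal is correct and follows essentially the same route as the paper. The paper does not give a separate proof for this lemma at all: it simply states ``In view of Lemma \ref{2paper1lemma5000}, we have proved the following three lemmas'', meaning that the argument is a direct replay of the explicit case-by-case listing of color classes carried out for $H_{1,j}^{-2,s}$. Your plan to exhibit the $s+1$ color classes explicitly for each admissible $(m,j,s)$ and verify by inspection is precisely that replay. Your lower-bound computation $\deg_H(u_0)=s+1$ is an improvement in rigor over the paper, which never isolates the lower bound and relies tacitly on Vizing's theorem later in Theorem \ref{paper2theorem13}. One caution: the intermediate greedy heuristic you sketch (extend from the star at $u_0$ along surviving edge-sum classes) is not by itself a proof, since $\Delta$-colorability is not guaranteed by $\deg_H(v)\le s$ for $v\ne u_0$; but you already recognize this and correctly fall back on explicit verification, which is all the paper does.
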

\begin{lemma}\label{paper1lemma500000}
The edge chromatic number of  $H_{1,1}^{-i,2}$ for $i=3,\cdots,6$, $H_{2,1}^{-i,2}$ for $i=3,\cdots,8$ and  $H_{3,1}^{-i,2}$ for $i=3,\cdots,10$ is $i.$  
\end{lemma}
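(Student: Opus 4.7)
The plan is to establish $\chi'(H_{m,1}^{-i,2}) = i$ in each of the three families by sandwiching: show $\chi'\ge i$ from the maximum degree, then exhibit an explicit $i$-edge-coloring in the style of Lemma \ref{2paper1lemma5000}.

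For the lower bound, I will compute $\Delta(H_{m,1}^{-i,2})$ using Theorem \ref{theorem12451}. In $G_{-i,2}$ the vertex $u_0$ is adjacent to every other vertex and so carries the (unique) maximum degree $i+2$. The graph $H_{m,1}^{-i,2}$ is obtained by deleting $u_{-m}$, $u_1$ together with the edge-sum classes $[e_{-m}]$ and $[e_1]$. Since every edge incident to $u_0$ has sum equal to the label of its other endpoint, the only edges at $u_0$ that are removed are $\{u_0,u_{-m}\}$ (lying in $[e_{-m}]$) and $\{u_0,u_1\}$ (lying in $[e_1]$). Hence $\deg(u_0)=i$ in $H_{m,1}^{-i,2}$. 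A short check on the remaining vertices, again via Theorem \ref{theorem12451} together with an accounting of which incident edges fall into $[e_{-m}]\cup[e_1]$, confirms that no other vertex has degree exceeding $i$. Thus $\Delta=i$ and $\chi'\ge i$.

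For the upper bound I mirror the construction style of Lemma \ref{2paper1lemma5000}: for the smallest admissible $i$ in each family (for instance $H_{1,1}^{-3,2}$, $H_{2,1}^{-3,2}$, and the first valid case of the third family) I exhibit an explicit partition of the edge set into $i$ matchings, each labelled by a sum in $S$. The $i$ edges incident to $u_0$ are pairwise adjacent and so must receive the $i$ distinct colors; they anchor the color classes. The remaining edges lie among the negative vertices and $u_2$, where each vertex has degree strictly less than $i$, and so can be fitted into the existing palette without conflict at either endpoint. Having established the smallest admissible case, the same template extends to every larger $i$ in the stated range.

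The main obstacle is confirming that this extension is uniformly valid across each of the three families and across their specified ranges of $i$. Since Vizing's theorem only guarantees $\Delta+1$ colors in general, attaining $\Delta=i$ is not automatic: when the palette defined by the $u_0$-star is carried over to the residual subgraph on the negative vertices and $u_2$, one must check vertex by vertex that no two edges sharing an endpoint receive the same color, keeping track of the edges erased by $[e_{-m}]\cup[e_1]$. This case analysis is the bulk of the work and parallels the ``similarly'' step of Lemma \ref{2paper1lemma5000}.
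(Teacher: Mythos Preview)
Your proposal is correct and follows essentially the same approach as the paper: the paper's entire proof is the sentence ``In view of Lemma~\ref{2paper1lemma5000}, we have proved the following three lemmas,'' i.e., the same explicit-matching verification for small cases that you outline, with the lower bound $\chi'\ge\Delta$ left implicit. You could shorten your argument further by observing that the label-negation map $v\mapsto -v$ gives a graph isomorphism $H_{m,1}^{-i,2}\cong H_{1,m}^{-2,i}$, so the statement is literally Lemma~\ref{2paper1lemma5000} transported across this symmetry and no construction needs to be repeated.
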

\begin{lemma}\label{paper1lemma50000}
The edge chromatic number of  $H_{1,j}^{-i,3}$  for $i=4,\cdots,7$, $H_{2,j}^{-i,3}$ for $i=4,\cdots,9$ and  $H_{3,j}^{-i,3}$   for $i=4,\cdots,11$ is $i+1$ for $j=1,2$.  
\end{lemma}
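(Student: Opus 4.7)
The plan is to proceed in the style of Lemma~\ref{2paper1lemma5000}: establish $\chi'(H_{m,j}^{-i,3})\geq i+1$ from the maximum degree, and then match this lower bound by exhibiting an explicit partition of $E(H_{m,j}^{-i,3})$ into $i+1$ matchings for each triple $(m,j,i)$ listed in the statement.

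For the lower bound, I would apply Theorem~\ref{theorem12451} to $G_{-i,3}$, which has order $n=i+4$. The vertex $u_0$ attains the unique maximum degree $n-1=i+3$. When we delete $u_{-m}$, $u_j$, and the edge-sum colour classes $[e_{-m}], [e_j]$, exactly two edges incident to $u_0$ disappear, namely $(u_0,u_{-m})$ and $(u_0,u_j)$; indeed, the only edge of $[e_k]$ meeting $u_0$ is $(u_0,u_k)$, since the second endpoint must satisfy $0+x=k$. Hence $\deg_{H}(u_0)=i+1$. A brief inspection of the remaining degrees via Theorem~\ref{theorem12451} shows that every other vertex of $H_{m,j}^{-i,3}$ loses at least as many edges from its original degree, so $\Delta(H_{m,j}^{-i,3})=i+1$ and therefore $\chi'(H_{m,j}^{-i,3})\ge\Delta(H_{m,j}^{-i,3})=i+1$.

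For the upper bound, I would start from the base case $i=4$ and, for each pair $(m,j)\in\{1,2,3\}\times\{1,2\}$, write out $5$ explicit matchings covering $E(H_{m,j}^{-4,3})$ in the same spirit as the colour-class listings appearing in the proof of Lemma~\ref{2paper1lemma5000}. These matchings should be chosen so that the $i+1$ edges incident to $u_0$ receive all $i+1$ distinct colours (serving as an anchor), after which every other edge can be slotted consistently because its endpoints have degree strictly less than $i+1$. For larger $i$ within each stated range, the same pattern extends: each unit increment in $i$ introduces a new vertex $u_{-(i+1)}$ whose surviving edges can be absorbed into the existing colour classes without creating any monochromatic adjacency, keeping the total at $i+1$.

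The main obstacle is the routine but lengthy bookkeeping of verifying, triple by triple $(m,j,i)$, that the proposed matchings both cover every edge of $H_{m,j}^{-i,3}$ and remain conflict-free, particularly around $u_0$ where all colours are simultaneously in use. This is precisely the verification that the preceding lemmas of this section leave implicit under the ``similarly we can show'' phrasing, and I would follow the same convention, writing out the explicit palette only for the three representatives $H_{1,j}^{-4,3}, H_{2,j}^{-4,3}, H_{3,j}^{-4,3}$.
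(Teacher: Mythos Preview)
Your plan is correct and coincides with the paper's approach: the lemma is stated there without a separate proof, deferring entirely to the explicit colour-class constructions of Lemma~\ref{2paper1lemma5000}, which is precisely what you propose to replicate case by case (your added lower-bound step via $\Delta(H_{m,j}^{-i,3})=\deg_H(u_0)=i+1$ makes explicit what the paper leaves tacit). One small wording fix: passing from $i$ to $i+1$ creates, besides the edges at the new vertex $u_{-(i+1)}$, all edges with label sum $-(i+1)$, and the target $i+1$ itself grows by one, so the extension needs one \emph{new} colour class rather than absorption into the old ones---but since you ultimately verify each triple $(m,j,i)$ by direct listing this does not affect the validity of the plan.
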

\begin{theorem}\label{0paper2theorem10}
Let $H_{m,j}^{-2,s}$ be the integral sum graphs for all $s\geq3 $, $m=1$ and $j=1,2$. Then these are non-perfect edge sum color graphs for all $s\geq3.$ 
\end{theorem}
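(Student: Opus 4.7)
The plan is to establish non-perfectness by separately pinning down $\chi'(H_{1,j}^{-2,s})$ and $\chi'_{\mathbb{Z}-sum}(H_{1,j}^{-2,s})$ and showing the two disagree, which by Definition \ref{paper2def3} suffices. Concretely the aim is $\chi'_{\mathbb{Z}-sum} > \chi'$ for $j \in \{1,2\}$ and all $s \geq 3$.

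For $\chi'$ I would first argue $\Delta(H_{1,j}^{-2,s}) = s$: since $0 + v = v$ lies in $[-2, s]$ for every $v$, the vertex $u_0$ is adjacent in $G_{-2,s}$ to every other vertex, and after removing $u_{-1}$, $u_j$ together with the two associated edge-sum color classes its degree drops to exactly $s$. This gives $\chi' \geq s$. For the matching upper bound I would extend the explicit $s$-edge colorings of Lemma \ref{2paper1lemma5000} to arbitrary $s \geq 3$ by scheduling the $s$ star-edges at $u_0$ with distinct colors $1, 2, \ldots, s$ and extending greedily, exploiting the fact that every other vertex has degree at most $s - 1$.

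For $\chi'_{\mathbb{Z}-sum}$ I would exploit the following structural observation specific to integral sum graphs: two edges sharing the same endpoint-sum are automatically vertex-disjoint, because $a + b = a + c$ forces $b = c$. Consequently every sum class $[e_k]$ is a matching, and $\chi'_{\mathbb{Z}-sum}(H_{1,j}^{-2,s})$ just equals the number of distinct edge sums realized in the graph. I would enumerate those sums by going through each family of edges (edges at $u_0$, edges at $u_{-2}$, edges of the form $(1,k)$ or $(2,k)$, and edges among larger positive labels), tallying which distinct sums appear and discarding the removed classes $[e_{-1}]$ and $[e_j]$.

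For $j=1$ this enumeration yields the sum set $\{-2, 0, 2, 3, \ldots, s\}$ of cardinality $s+1$, the extra sum $0$ arising from the surviving edge $(-2, 2)$, and the comparison $s + 1 > s$ gives non-perfectness immediately. An analogous enumeration for $j = 2$ pinpoints the additional realized sum beyond the $s$ accounted for by $\chi'$. The main obstacle is precisely this careful bookkeeping of realized edge sums, especially in the $j = 2$ case where the simultaneous removal of vertex $u_2$ and color class $[e_2]$ cancels several candidate sums at once, and one must identify exactly which sums survive to strictly exceed $\chi'$.
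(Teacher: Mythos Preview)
Your overall strategy---compute $\chi'_{\mathbb{Z}\text{-}sum}$ and $\chi'$ separately and show they disagree---is exactly the paper's. Your identification of $\chi'_{\mathbb{Z}\text{-}sum}$ with the number of realized edge sums, and the count $s+1$ for $j=1$, is correct and in fact more explicit than the paper's one-line ``by similar analysis the edge-sum chromatic index is $s+1$.'' The lower bound $\chi' \geq \Delta = s$ via the degree of $u_0$ is also fine.

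The real gap is the upper bound $\chi' \leq s$. You propose to give the $s$ star-edges at $u_0$ distinct colors and then ``extend greedily, exploiting the fact that every other vertex has degree at most $s-1$.'' That argument does not go through: greedy edge-coloring only guarantees $2\Delta - 1$ colors, not $\Delta$. Once the star is colored, a remaining edge $uv$ sees up to $(\deg u - 1) + (\deg v - 1)$ forbidden colors at its endpoints, and for two vertices of degree $s-1$ this is already $2s-4$, which exceeds $s-1$ for $s\geq 4$; so greediness may well demand an $(s{+}1)$-st color. Having a unique vertex of maximum degree is not, by itself, enough to force Class~1, and the graph with $u_0$ removed is not bipartite (for $s\geq 7$ the vertices $2,3,4$ form a triangle), so there is no quick K\H{o}nig-type shortcut either. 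The paper does not argue greedily at all: it writes down, for each $j\in\{1,2,3\}$, an explicit list of $s$ matchings (parametrized by $k$ running up to $s$) that together cover every edge, and invokes Lemma~\ref{2paper1lemma5000} for the finitely many small $s$ not covered by the general scheme. You need either such an explicit construction or a genuine structural argument---the greedy step as stated is not a proof.
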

\begin{proof}
By similar analysis, we have independent edge sum color classes of $H_{1,j}^{-2,s}$ are  $E_{-2}$, $E_{0}$, $E_{1}, \cdots $, $ E_{j-1}$, $E_{j+1}, \cdots $, $E_{s}$. So, the edge sum chromatic index is $s+1.$ Now, we define the following edge color classes:\\
{\em Case A: $m=1$, $j=1$ and $s\geq7$:}
\begin{eqnarray}\label{2paper1eq1110124}
&&\nonumber \left\lbrace (0,-2), ~(2,k-3),~ (3,k-4), ~ \cdots, ~ \left(\left\lfloor\frac{k-2}{2}\right\rfloor, \left\lfloor\frac{k+1}{2}\right\rfloor\right) \right\rbrace,\\
&&\nonumber \left\lbrace (0,k-1), ~(2,k-2),~ (3,k-3)), ~ \cdots, ~ \left(\left\lfloor\frac{k-1}{2}\right\rfloor, \left\lfloor\frac{k+2}{2}\right\rfloor\right) \right\rbrace,\\
&& \left\lbrace(0,k),~ (-2,2) \right\rbrace;\mbox{ where } k=s,\\
&&\nonumber \left\lbrace (-2,k), ~(0,k-2) \right\rbrace; \mbox{ where } k=4,5,6,\\
&&\nonumber \left\lbrace (-2,k), ~(0,k-2), ~(2,k-4),~ (3,k-5), ~\cdots,~ \left(\left\lfloor\frac{k-3}{2}\right\rfloor, \left\lfloor\frac{k}{2}\right\rfloor\right) \right\rbrace;\\
&&\nonumber \hspace{2cm} \mbox{where } k=7,8,9, ~\cdots~ , s.
\end{eqnarray}
{\em Case B: $m=1$, $j=2$ and $s\geq9$:}
\begin{eqnarray}\label{2paper1eq121224}
&&\nonumber \left\lbrace (0,-2), ~(1,k-2),~ (3,k-4),~(4,k-5), ~ \cdots, ~ \left(\left\lfloor\frac{k-2}{2}\right\rfloor, \left\lfloor\frac{k+1}{2}\right\rfloor\right) \right\rbrace,\\
&&\nonumber \left\lbrace(0,k), ~(1,k-1), ~(3,k-3), ~(4,k-4), ~\cdots, ~\left(\left\lfloor\frac{k-1}{2}\right\rfloor, \left\lfloor\frac{k+2}{2}\right\rfloor\right) \right\rbrace,\\
&& \left\lbrace (0,k-1), \right\rbrace; \mbox{ where } k=s, \\ 
&&\nonumber \left\lbrace (-2,3), ~(0,1) \right\rbrace, \left\lbrace(-2,5),   ~(0,3)  \right\rbrace,\\
&&\nonumber \left\lbrace(-2,k),  ~(0,k-2), ~(1,k-3) \right\rbrace;\mbox{ where } k=6,7,8, \\
&&\nonumber\left\lbrace (-2,k),   ~(0,k-2), ~(1,k-3), ~(3,k-5), ~(4,k-6), ~(5,k-7), ~\cdots\right.\\
&& \nonumber \hspace{3.5cm} \left.\cdots, ~\left(\left\lfloor\frac{k-3}{2}\right\rfloor, \left\lfloor\frac{k}{2}\right\rfloor\right)\right\rbrace; \mbox{ where } k=9,10,11,12, ~\cdots~ , s.
\end{eqnarray}
{\em Case C: $m=1$, $j=3$ and $s\geq11$:}
\begin{eqnarray}\label{2paper1eq1212214}
&&\nonumber \left\lbrace (0,-2), ~(1,k-2),~ (2,k-3),~(4,k-5), ~ \cdots, ~ \left(\left\lfloor\frac{k-2}{2}\right\rfloor, \left\lfloor\frac{k+1}{2}\right\rfloor\right) \right\rbrace,\\
&&\nonumber \left\lbrace(0,k), ~(1,k-1), ~(2,k-2), ~(4,k-4), ~\cdots, ~\left(\left\lfloor\frac{k-1}{2}\right\rfloor, \left\lfloor\frac{k+2}{2}\right\rfloor\right) \right\rbrace,\\
&& \left\lbrace (0,k-1), ~(-2,2) \right\rbrace; \mbox{ where } k=s, \\
&& \left\lbrace (0,1) \right\rbrace, \left\lbrace (-2,4), ~(0,2) \right\rbrace, \nonumber \left\lbrace (-2,6), ~(0,4) \right\rbrace,  \left\lbrace (-2,7), ~(0,5), ~(1,4) \right\rbrace, \\
&&\nonumber \left\lbrace(-2,k),  ~(0,k-2), ~(1,k-3),~(2,k-4) \right\rbrace;\mbox{ where } k=8,9,10. \\
&&\nonumber\left\lbrace (-2,k),  ~(0,k-2), ~(1,k-3), ~(2,k-4), ~(4,k-6), ~(5,k-7), ~\cdots\right.\\
&& \nonumber \hspace{3.5cm} \left.\cdots, ~\left(\left\lfloor\frac{k-3}{2}\right\rfloor, \left\lfloor\frac{k}{2}\right\rfloor\right)\right\rbrace; \mbox{where } k=11,12, ~\cdots~ , s.
\end{eqnarray}
So, from Lemma \ref{2paper1lemma5000} and equations (\ref{2paper1eq1110124}), (\ref{2paper1eq121224}), (\ref{2paper1eq1212214}), we have  
\begin{center}
$\chi'\left(H_{m,j}^{-2,s}\right)=s$ for all $s\geq 3 $, $m=1$ and $j=1,2,3,$
\end{center}
which satisfies the Vizing's theorem.    Hence, the integral sum graph $H_{m,j}^{-2,s}$  are  non-perfect edge-sum color graphs for all $s  \geq   3$, $m=1$ and $j=1,2,3$.
\end{proof}
\begin{theorem}\label{paper2theorem13}
Let $H_{m,j}^{-3,s}$ be the integral sum graphs for all $s\geq4 $, $m=1,2$ and $j=1,2,3$. Then these are non-perfect edge sum color graphs for all $s\geq4.$ 
\end{theorem}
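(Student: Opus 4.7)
The plan is to mirror the template of Theorem \ref{0paper2theorem10} but for the richer family $H_{m,j}^{-3,s}$, where the extra negative label $-3$ creates an additional independent edge-sum class and therefore shifts both invariants up by one. The strategy has three phases: first establish the edge-sum chromatic index exactly, then establish the edge chromatic index by combining Lemma \ref{00paper1lemma500} with an explicit small-case-to-large-case construction, and finally compare.

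For the first phase, I would fix $m\in\{1,2\}$ and $j\in\{1,2,3\}$ and list the maximal independent edge-sum color classes $E_k$ for each label $k\in[-3,s]\setminus\{-m,j\}$ that actually occurs as a sum of two surviving labels. By the same parity-free pigeonhole argument used in the $H^{-2,s}$ case, each $E_k$ is a matching (two edges sharing an endpoint would force the sums to differ), so the $E_k$'s are edge-sum independent. Counting the surviving labels gives $\chi'_{\mathbb{Z}-sum}\!\left(H_{m,j}^{-3,s}\right)=s+2$.

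For the second phase, I would first invoke Theorem \ref{theorem12451} to compute $\Delta\!\left(H_{m,j}^{-3,s}\right)$; a quick check shows it equals $s+1$ once $s\geq 4$, so Vizing's theorem forces $\chi'\in\{s+1,s+2\}$. Lemma \ref{00paper1lemma500} already handles the small ranges of $s$ for each pair $(m,j)$. For the remaining large values of $s$, I would construct an explicit $(s+1)$-edge-coloring, generalising Cases A, B, C of Theorem \ref{0paper2theorem10}. The template is the same: two long "diagonal" matchings built by pairing labels symmetrically around $\lfloor k/2\rfloor$ (one matching per target sum $k=s$ and $k=s-1$), a short cleanup class containing the leftover edge at $(-3,0)$ or $(-2,0)$ type pairs, and a family of shorter diagonal matchings indexed by $k=4,5,\ldots,s$ with the appropriate small-$k$ exceptions. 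One needs a separate sub-construction for each of the six pairs $(m,j)\in\{1,2\}\times\{1,2,3\}$ to account for the missing vertex $u_{-m}$ and the forbidden sum $j$.

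The main obstacle, as in the previous theorem, is entirely combinatorial bookkeeping rather than conceptual: one must verify that the proposed edge classes (i) partition $E\!\left(H_{m,j}^{-3,s}\right)$, whose size is pinned down by Theorem \ref{paper2theorem103} minus the contribution of the deleted vertices and edge-sum classes, and (ii) are genuine matchings, i.e.\ no label appears in two pairs of the same class. The verification of (ii) reduces to checking, for each diagonal, that the removed labels $-m$ and $j$ do not force two surviving pairs to share an endpoint; this is where the six sub-cases diverge and small correction classes (the analogues of $\{(-2,3),(0,1)\}$ and $\{(-2,5),(0,3)\}$ in Case B above) become necessary. Once these constructions are in place, combining them with Lemma \ref{00paper1lemma500} and Vizing's lower bound yields $\chi'\!\left(H_{m,j}^{-3,s}\right)=s+1$, and comparing with $\chi'_{\mathbb{Z}-sum}=s+2$ gives the non-perfection assertion by Definition \ref{paper2def3}.
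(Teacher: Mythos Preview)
Your proposal is correct and follows essentially the same approach as the paper: compute $\chi'_{\mathbb{Z}\text{-sum}}=s+2$ by counting surviving sum labels, invoke Lemma \ref{00paper1lemma500} for the small values of $s$, and for larger $s$ exhibit an explicit $(s+1)$-edge-coloring in six sub-cases indexed by $(m,j)\in\{1,2\}\times\{1,2,3\}$ using diagonal matchings with a few exceptional short classes. The paper carries out precisely these six constructions (its Cases A--F, with thresholds $s\geq 8$, $s\geq 10$, $s\geq 12$ depending on $j$), so your outline matches it; your explicit appeal to $\Delta=s+1$ via Theorem \ref{theorem12451} to pin down the Vizing lower bound is a welcome clarification that the paper leaves implicit.
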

\begin{proof}
By similar analysis, we have the edge sum chromatic index is $s+2.$ Now, we apply edge coloring on these graphs.\\
{\em Case A: $m=1$, $j=1$  and $s\geq8$:}  
\begin{eqnarray}
&&\nonumber \left\lbrace  (0,-3), ~(2,k-4),~ (3,k-5), ~ \cdots, ~ \left(\left\lfloor\frac{k-3}{2}\right\rfloor, \left\lfloor\frac{k}{2}\right\rfloor\right) \right\rbrace,
\end{eqnarray}
\begin{eqnarray}\label{paper1eq1115421}
&&\nonumber \left\lbrace (0,-2), ~(2,k-3),~ (3,k-4), ~ \cdots, ~ \left(\left\lfloor\frac{k-2}{2}\right\rfloor, \left\lfloor\frac{k+1}{2}\right\rfloor\right) \right\rbrace,\\
&&\nonumber \left\lbrace (0,k-1), ~(2,k-2),~ (3,k-3), ~ \cdots, ~ \left(\left\lfloor\frac{k-1}{2}\right\rfloor, \left\lfloor\frac{k+2}{2}\right\rfloor\right) \right\rbrace,\\
&& \left\lbrace(0,k),~ (-2,2),~(-3,3) \right\rbrace,  \left\lbrace ~(-2,k),~ (0,k-2) \right\rbrace;\mbox{ where } k=s,\\
&&\nonumber \left\lbrace (-3,k), ~(-2,k-1), ~(0,k-3) \right\rbrace; \mbox{where } k=5,6,7.\\
&&\nonumber \left\lbrace (-3,k), ~(-2,k-1),~(0,k-3), ~(2,k-5),~ (3,k-6), ~\cdots,~ \left(\left\lfloor\frac{k-4}{2}\right\rfloor, \left\lfloor\frac{k-1}{2}\right\rfloor\right) \right\rbrace;\\ &&\nonumber \hspace{3.5cm}\mbox{where } k=8,9, ~\cdots~ , s.
\end{eqnarray}
{\em Case B: $m=2$, $j=1$ and $s\geq8$:}
\begin{eqnarray}\label{paper1eq111145452}
&&\nonumber \left\lbrace  (0,-3), ~(2,k-4),~ (3,k-5), ~ \cdots, ~ \left(\left\lfloor\frac{k-3}{2}\right\rfloor, \left\lfloor\frac{k}{2}\right\rfloor\right) \right\rbrace,\\
&&\nonumber \left\lbrace (-1,k-1),~(0,k-2), ~(2,k-3),~ (3,k-4), ~ \cdots, ~ \left(\left\lfloor\frac{k-2}{2}\right\rfloor, \left\lfloor\frac{k+1}{2}\right\rfloor\right) \right\rbrace,\\
&&\nonumber \left\lbrace (0,-1), ~(2,k-2),~ (3,k-3), ~ \cdots, ~ \left(\left\lfloor\frac{k-1}{2}\right\rfloor, \left\lfloor\frac{k+2}{2}\right\rfloor\right) \right\rbrace,\\
&& \left\lbrace(0,k), ~(-3,3) \right\rbrace,  \left\lbrace (-3,2),~(-1,k),~ (0,k-1) \right\rbrace,\mbox{ where } k=s,\\
&&\nonumber \left\lbrace (-3,k),  ~(-1,k-2),~(0,k-3) \right\rbrace; \mbox{ where } k=5,6,7,\\
&&\nonumber \left\lbrace (-3,k), ~(-1,k-2),~(0,k-3), ~(2,k-5),~ (3,k-6), ~\cdots,~ \left(\left\lfloor\frac{k-4}{2}\right\rfloor, \left\lfloor\frac{k-1}{2}\right\rfloor\right) \right\rbrace;\\ &&\nonumber \hspace{3.5cm}\mbox{where } k=8,9, ~\cdots~ , s.
\end{eqnarray}
{\em Case C: $m=1$, $j=2$ and $s\geq10$:}
\begin{eqnarray}\label{paper1eq11112}
&&\nonumber \left\lbrace (0,-3), ~(1,k-3),~ (3,k-5), ~ \cdots, ~ \left(\left\lfloor\frac{k-3}{2}\right\rfloor, \left\lfloor\frac{k}{2}\right\rfloor\right) \right\rbrace,\\
&&\nonumber \left\lbrace (0,-2), ~(1,k-2),~ (3,k-4), ~ \cdots, ~ \left(\left\lfloor\frac{k-2}{2}\right\rfloor, \left\lfloor\frac{k+1}{2}\right\rfloor\right) \right\rbrace,\\
&&\nonumber \left\lbrace (0,k), ~(1,k-1),~ (3,k-3), ~ \cdots, ~ \left(\left\lfloor\frac{k-1}{2}\right\rfloor, \left\lfloor\frac{k+2}{2}\right\rfloor\right) \right\rbrace,\\
&& \left\lbrace(0,k-1),~(-3,3) \right\rbrace,  \left\lbrace ~(-3,1),~(-2,k),~ (0,k-2) \right\rbrace;\mbox{ where } k=s,\\
&&\nonumber \left\lbrace (-3,4),~(-2,3),~ (0,1) \right\rbrace,  \left\lbrace (-3,6),~(-2,5), ~(0,3) \right\rbrace,\\
&&\nonumber \left\lbrace (-3,k), ~(-2,k-1), ~(0,k-3),~(1,k-4) \right\rbrace; \mbox{ where } k=7,8,9,\\
&&\nonumber \left\lbrace (-3,k), ~(-2,k-1), ~(0,k-3), ~(1,k-4),~ (3,k-6), ~\cdots,~ \left(\left\lfloor\frac{k-4}{2}\right\rfloor, \left\lfloor\frac{k-1}{2}\right\rfloor\right) \right\rbrace;\\ &&\nonumber \hspace{3.5cm}\mbox{where } k=10,11, ~\cdots~ , s.
\end{eqnarray}
 {\em Case D: $m=2$, $j=2$ and $s\geq10$:}
\begin{eqnarray}\label{paper1eq111123}
&&\nonumber \left\lbrace   (0,-3), ~(1,k-3),~ (3,k-5), ~ \cdots, ~ \left(\left\lfloor\frac{k-3}{2}\right\rfloor, \left\lfloor\frac{k}{2}\right\rfloor\right) \right\rbrace,\\
&&\nonumber \left\lbrace (-1,k),~(0,k-1), ~(1,k-2),~ (3,k-4), ~ \cdots, ~ \left(\left\lfloor\frac{k-2}{2}\right\rfloor, \left\lfloor\frac{k+1}{2}\right\rfloor\right) \right\rbrace,\\
&&\nonumber \left\lbrace (0,-1), ~(1,k-1),~ (3,k-3), ~ \cdots, ~ \left(\left\lfloor\frac{k-1}{2}\right\rfloor, \left\lfloor\frac{k+2}{2}\right\rfloor\right) \right\rbrace,\\
&& \left\lbrace(0,k),~(-1,1),~(-3,3) \right\rbrace,  \left\lbrace  (-1,k-1),~ (0,k-2) \right\rbrace;\mbox{  where } k=s,\\
&&\nonumber \left\lbrace (-3,4), ~ (0,1) \right\rbrace,  \left\lbrace (-3,6), ~ (-1,4),~(0,3) \right\rbrace,\\
&&\nonumber \left\lbrace (-3,k), ~(-1,k-2),~(0,k-3),~(1,k-4) \right\rbrace; \mbox{ where } k=7,8,9,\\
&&\nonumber \left\lbrace (-3,k), ~(-1,k-2),~(0,k-3), ~(1,k-4),~ (3,k-6), ~\cdots,~ \left(\left\lfloor\frac{k-4}{2}\right\rfloor, \left\lfloor\frac{k-1}{2}\right\rfloor\right) \right\rbrace;\\ &&\nonumber \hspace{3.5cm}\mbox{where } k=10,11, ~\cdots~ , s.
\end{eqnarray}
{\em Case E: $m=1$,  $j=3$ and $s\geq12$:}
\begin{eqnarray}\label{paper1eq1111114585}
&&\nonumber \left\lbrace   (0,-3), ~(1,k-3),~ (2,k-4),~(4,k-6), ~ \cdots, ~ \left(\left\lfloor\frac{k-3}{2}\right\rfloor, \left\lfloor\frac{k}{2}\right\rfloor\right) \right\rbrace,\\
&&\nonumber \left\lbrace (0,-2), ~(1,k-2),~ (2,k-3),~(4,k-5), ~ \cdots, ~ \left(\left\lfloor\frac{k-2}{2}\right\rfloor, \left\lfloor\frac{k+1}{2}\right\rfloor\right) \right\rbrace,\\
&&\nonumber \left\lbrace (0,k), ~(1,k-1),~ (2,k-2),~(4,k-4), ~ \cdots, ~ \left(\left\lfloor\frac{k-1}{2}\right\rfloor, \left\lfloor\frac{k+2}{2}\right\rfloor\right) \right\rbrace,\\
&&   \left\lbrace(0,k-1),~(-2,2) \right\rbrace, \left\lbrace ~(-3,1),~(-2,k),~ (0,k-2) \right\rbrace;\mbox{ where } k=s,\\
&&\nonumber \left\lbrace (-3,4),~ (0,1) \right\rbrace,  \left\lbrace (-3,5),~(-2,4),~ (0,2) \right\rbrace,  \left\lbrace (-3,7),~(-2,6),~(0,4) \right\rbrace,\\
&&\nonumber \left\lbrace (-3,8),~(-2,7), ~(0,5), ~(1,4) \right\rbrace,\\
&&\nonumber \left\lbrace (-3,k), ~(-2,k-1),  ~(0,k-3),~(1,k-4),~(2,k-5) \right\rbrace; \mbox{where } k=9,10,11,\\
&&\nonumber \left\lbrace (-3,k), ~(-2,k-1),  ~(0,k-3), ~(1,k-4),~ (2,k-5),~(4,k-7), ~\cdots \right.\\ 
&&\nonumber \hspace{3.5cm} ~\left. \cdots, \left(\left\lfloor\frac{k-4}{2}\right\rfloor, \left\lfloor\frac{k-1}{2}\right\rfloor\right) \right\rbrace \mbox{ where } k=12,13, ~\cdots~ , s.
\end{eqnarray}
{\em Case F: $m=2$, $j=3$ and $s\geq12$:}
\begin{eqnarray}\label{paper1eq111111784}
&&\nonumber \left\lbrace   (0,-3), ~(1,k-3),~ (2,k-4),~(4,k-6), ~ \cdots, ~ \left(\left\lfloor\frac{k-3}{2}\right\rfloor, \left\lfloor\frac{k}{2}\right\rfloor\right) \right\rbrace,\\
&&\nonumber \left\lbrace (-1,k-1),~(0,k), ~(1,k-2),~ (2,k-3),~(4,k-5), ~ \cdots, ~ \left(\left\lfloor\frac{k-2}{2}\right\rfloor, \left\lfloor\frac{k+1}{2}\right\rfloor\right) \right\rbrace,\\
&&\nonumber \left\lbrace (0,-1), ~(1,k-1),~ (2,k-2)),~(4,k-4), ~ \cdots, ~ \left(\left\lfloor\frac{k-1}{2}\right\rfloor, \left\lfloor\frac{k+2}{2}\right\rfloor\right) \right\rbrace,\\
&&  \left\lbrace(0,k-2),~(-1,1) \right\rbrace,
  \left\lbrace(-3,2), ~(-1,k),~ (0,k-1) \right\rbrace;\mbox{ where } k=s,\\
&&\nonumber \left\lbrace (-3,4),~(-1,2),~ (0,1) \right\rbrace,  \left\lbrace (-3,5), ~ (0,2) \right\rbrace,  \left\lbrace (-3,7), ~ (-1,5),~(0,4) \right\rbrace, \\
&&\nonumber \left\lbrace (-3,8), ~ (-1,6),~(0,5), ~(1,4) \right\rbrace,\\
&&\nonumber \left\lbrace (-3,k),  ~(-1,k-2),~(0,k-3),~(1,k-4),~(2,k-5) \right\rbrace; \mbox{where } k=9,10,11,\\
&&\nonumber \left\lbrace (-3,k),  ~(-1,k-2),~(0,k-3), ~(1,k-4),~ (2,k-5),~(4,k-7), ~\cdots \right.\\ 
&&\nonumber \hspace{3.5cm} ~\left. \cdots, \left(\left\lfloor\frac{k-4}{2}\right\rfloor, \left\lfloor\frac{k-1}{2}\right\rfloor\right) \right\rbrace \mbox{ where } k=12,13, ~\cdots~ , s.
\end{eqnarray}
 By using Lemma \ref{00paper1lemma500},  we have the edge chromatic number is 
\begin{center}
$\chi'\left(H_{m,j}^{-3,s}\right)=s+1$ for all $s\geq 4 $, $m=1,2$ and $j=1,2,3.$
\end{center}
 Hence the proof is complete.
\end{proof}
\begin{theorem}\label{paper2theorem90}
Let $H_{m,j}^{-i,2}$ be the integral sum graphs for all $i\geq3 $, $m=1,2,3$ and $j=1$. Then these are non-perfect edge sum color graph for all $i\geq3.$ 
\end{theorem}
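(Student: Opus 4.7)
The plan is to mirror Theorem \ref{0paper2theorem10} by exploiting the structural symmetry between the negative and positive ranges of the integral sum graph $G_{-i,s}$. Swapping the roles of $i$ and $s$ (and reflecting labels $k \leftrightarrow -k$), the graph $H_{m,1}^{-i,2}$ plays the same role with respect to the negative side that $H_{1,j}^{-2,s}$ played with respect to the positive side. So the expected outcome is $\chi'_{\mathbb{Z}\text{-sum}}(H_{m,1}^{-i,2}) = i+1$ while $\chi'(H_{m,1}^{-i,2}) = i$, yielding non-perfection by Definition \ref{paper2def3}.

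First I would establish the edge-sum chromatic index. Following the analysis in Theorem \ref{0paper2theorem10}, the independent edge-sum color classes are $E_{-i}, E_{-(i-1)}, \ldots, E_{-(m+1)}, E_{-(m-1)}, \ldots, E_{-1}, E_0, E_2$, which gives $i+1$ classes. The argument proceeds by checking that each $E_k$ is a proper matching (any two edges sharing the same edge-sum $k$ cannot share a vertex, since $u + v = k$ determines the partner of each endpoint uniquely among available vertices), and that the removal of $u_{-m}$ and $u_1$ together with their edge-sum classes does not merge any remaining classes.

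Next I would construct an explicit proper edge coloring using only $i$ colors, which by Vizing's theorem is optimal since the maximum degree is $i$ (by Theorem \ref{theorem12451}, after removing the two vertices and their incident edges). For the base ranges $3 \leq i \leq 6$ when $m=1$, $3 \leq i \leq 8$ when $m=2$, and $3 \leq i \leq 10$ when $m=3$, Lemma \ref{paper1lemma500000} already supplies $\chi' = i$. For larger $i$, I would split into three cases $m=1,2,3$ and write down, analogously to equations (\ref{2paper1eq1110124}), (\ref{2paper1eq121224}), (\ref{2paper1eq1212214}), three "large" color classes of the form $\{(0,-k),\ (-2,-(k-3)),\ (-3,-(k-4)),\ \ldots\}$ that pair up negatively-labelled vertices of equal sum, together with a collection of small classes involving the vertex $u_2$ and the small negative-label edges needed to absorb the residual edges. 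The reflection $k \mapsto -k$ of the constructions already written in Theorem \ref{0paper2theorem10} should essentially transcribe to the present setting.

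The main obstacle will be the verification step: ensuring that every edge of $H_{m,1}^{-i,2}$ lies in exactly one of the listed classes, and that no class contains two edges sharing a vertex, especially near the "boundary" indices where the deleted vertex $u_{-m}$ forces case splits ($m=1,2,3$ each perturb a different band of small-index edges). Once the cases line up and the count gives $\chi' = i < i+1 = \chi'_{\mathbb{Z}\text{-sum}}$, non-perfection follows immediately from Definition \ref{paper2def3}.
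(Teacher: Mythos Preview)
Your proposal is correct and follows essentially the same approach as the paper: the paper computes the edge-sum chromatic index as $i+1$, then explicitly writes out the reflected color classes (your $k\mapsto -k$ idea) in three cases $m=1,2,3$ for $i\geq 7,9,11$ respectively, invoking Lemma~\ref{paper1lemma500000} for the small $i$, to obtain $\chi'(H_{m,1}^{-i,2})=i$. The only cosmetic difference is that the paper carries out the transcription of the classes from Theorem~\ref{0paper2theorem10} in full rather than appealing to the symmetry argument you outline.
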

\begin{proof}
Similary, we get the edge sum chromatic index is $i+1.$ Corresponding to different values of $m$ and $j$, we define the following independent edge color classes:\\
{\em Case A: $m=1$, $j=1$ and $i\geq7$:} 
\begin{eqnarray}\label{paper1eq1110124}
&&\nonumber \left\lbrace (0,2), ~(-2,-(k-3)),~ (-3,-(k-4)), ~ \cdots, ~ \left(-\left\lfloor\frac{k-2}{2}\right\rfloor, -\left\lfloor\frac{k+1}{2}\right\rfloor\right) \right\rbrace,\\
&&\nonumber \left\lbrace (0,-(k-1)), ~(-2,-(k-2)),~ (-3,-(k-3)), ~ \cdots, ~ \left(-\left\lfloor\frac{k-1}{2}\right\rfloor, -\left\lfloor\frac{k+2}{2}\right\rfloor\right) \right\rbrace,\\
&& \left\lbrace(0,-k),~ (-2,2) \right\rbrace;\mbox{ where } k=i,\\
&&\nonumber \left\lbrace (2,-k), ~(0,-(k-2)) \right\rbrace; \mbox{ where } k=4,5,6,\\
&&\nonumber \left\lbrace (2,-k), ~(0,-(k-2)), ~(-2,-(k-4)),~ (-3,-(k-5)), ~\cdots,~ \left(-\left\lfloor\frac{k-3}{2}\right\rfloor,- \left\lfloor\frac{k}{2}\right\rfloor\right) \right\rbrace;\\
&&\nonumber \hspace{2cm} \mbox{where } k=7,8,9 ~\cdots~ , i.
\end{eqnarray}
{\em Case B: $m=2$, $j=1$ and $i\geq9$:}
\begin{eqnarray}\label{paper1eq121224}
&&\nonumber \left\lbrace (0,2), ~(-1,-(k-2)),~ (-3,-(k-4)),~(-4,-(k-5)), ~ \cdots, ~ \left(-\left\lfloor\frac{k-2}{2}\right\rfloor, -\left\lfloor\frac{k+1}{2}\right\rfloor\right) \right\rbrace,\\
&&\nonumber \left\lbrace(0,-k), ~(-1,-(k-1)), ~(-3,-(k-3)), ~(-4,-(k-4)), ~\cdots, ~\left(-\left\lfloor\frac{k-1}{2}\right\rfloor, -\left\lfloor\frac{k+2}{2}\right\rfloor\right) \right\rbrace,\\
&&\nonumber \left\lbrace (0,-(k-1)) \right\rbrace; \mbox{ where } k=i, \\ 
&&\nonumber \left\lbrace (2,-3), ~(0,-1) \right\rbrace,   \left\lbrace(2,-5),   ~(0,-3)  \right\rbrace,\\
&&\nonumber \left\lbrace(2,-k),  ~(0,-(k-2)), ~(-1,-(k-3)) \right\rbrace;\mbox{ where } k=6,7,8, \\
&&\nonumber\left\lbrace (2,-k),   ~(-0,-(k-2)), ~(-1,-(k-3)), ~(-3,-(k-5)), ~(-4,-(k-6)), ~(-5,-(k-7)), ~\cdots\right.\\
&& \nonumber \hspace{3.5cm} \left.\cdots, ~\left(-\left\lfloor\frac{k-3}{2}\right\rfloor, -\left\lfloor\frac{k}{2}\right\rfloor\right)\right\rbrace; \mbox{ where } k=9,10, ~\cdots~ , i.
\end{eqnarray}
{\em Case C: $m=3$, $j=1$ and $i\geq11$:}
\begin{eqnarray}
&&\nonumber \left\lbrace (0,2), ~(-1,-(k-2)),~ (-2,-(k-3)),~(-4,-(k-5)), ~ \cdots, ~ \left(-\left\lfloor\frac{k-2}{2}\right\rfloor,- \left\lfloor\frac{k+1}{2}\right\rfloor\right) \right\rbrace,
\end{eqnarray}
\begin{eqnarray}\label{paper1eq1212214}
&&\nonumber \left\lbrace(0,-k), ~(-1,-(k-1)), ~(-2,-(k-2)), ~(-4,-(k-4)), ~\cdots, ~\left(-\left\lfloor\frac{k-1}{2}\right\rfloor, -\left\lfloor\frac{k+2}{2}\right\rfloor\right) \right\rbrace,\\
&&\nonumber \left\lbrace (0,-(k-1)), ~(-2,2) \right\rbrace; \mbox{ where } k=i, \\ 
&& \left\lbrace (0,-1) \right\rbrace,   \left\lbrace (2,-4), ~(0,-2) \right\rbrace,   \left\lbrace (2,-6), ~(0,-4) \right\rbrace,   \left\lbrace (2,-7), ~(0,-5), ~(-1,-4) \right\rbrace, \\
&&\nonumber \left\lbrace(2,-k),  ~(0,-(k-2)), ~(-1,-(k-3)),~(-2,-(k-4)) \right\rbrace;\mbox{ where } k=8,9,10, \\
&&\nonumber\left\lbrace (2,-k),  ~(0,-(k-2)), ~(-1,-(k-3)), ~(-2,-(k-4)), ~(-4,-(k-6)), ~(-5,-(k-7)), ~\cdots\right.\\
&& \nonumber \hspace{3.5cm} \left.\cdots, ~\left(-\left\lfloor\frac{k-3}{2}\right\rfloor, -\left\lfloor\frac{k}{2}\right\rfloor\right)\right\rbrace; \mbox{where } k=11,12, ~\cdots~ , i.
\end{eqnarray}
So, edge chromatic index is 
\begin{center}
$\chi'\left(H_{m,j}^{-i,2}\right)=i$ for all $i\geq 3 $, $m=1,2,3$ and $j=1$
\end{center}
which satisfies the Vizing's theorem.    Hence, the integral sum graph $H_{m,j}^{-i,2}$  are  non-perfect edge-sum color graphs for all $i  \geq   3$, $m=1,2,3$ and $j=1$.
\end{proof}
\begin{theorem}\label{paper2theorem11}
Let $H_{m,j}^{-i,3}$ be the integral sum graphs for all $i\geq4 $, $m=1,2,3$ and $j=1,2$. Then these are non-perfect edge sum color graph for all $i\geq4.$ 
\end{theorem}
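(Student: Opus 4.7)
The strategy mirrors the proof of Theorem \ref{paper2theorem13}, only with the roles of the positive and negative sides interchanged (exactly the relationship that Theorem \ref{paper2theorem90} bears to Theorem \ref{0paper2theorem10}). Since the edge-sum color classes are indexed by labels in $S$, the same counting argument used previously shows
\begin{equation*}
\chi'_{\mathbb{Z}-sum}\left(H_{m,j}^{-i,3}\right)=i+2
\end{equation*}
for all $i\geq 4$, $m\in\{1,2,3\}$ and $j\in\{1,2\}$. The heart of the argument is to exhibit, for each of the six pairs $(m,j)$, an explicit proper edge coloring using only $i+1$ colors, matching the lower bound from Lemma \ref{paper1lemma50000} and Vizing's theorem.

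For the explicit constructions I would reflect the color classes in Theorem \ref{paper2theorem13} across $0$, using the symmetry $u_\ell \leftrightarrow u_{-\ell}$. Concretely, for each case $(m,j)$ I would list three ``long diagonal'' classes parameterized by a running sum $k=i$ that bundle together edges $(\ell, -(k-\ell))$ for $\ell$ ranging from the smallest admissible positive label up to $\lfloor(k-1)/2\rfloor$ (so that no vertex is repeated within a class), plus two extra short classes absorbing the boundary edges incident to vertex $u_3$ (the positive-side analogue of the role $u_{-3}$ played before). Then I would give a family of classes indexed by $k=4,5,\dots,i$, each of the form $\{(-3, -k),(-2,-(k-1)),(0,-(k-3)),\dots\}$ where the last ``$\dots$'' expands into a diagonal once $k$ is large enough, and for small $k$ reduces to a handful of explicit singletons or pairs. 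The six cases correspond to whether $j\in\{1,2\}$ (which governs how the positive side is trimmed) and whether $m\in\{1,2,3\}$ (which governs how the negative side is trimmed), and the templates in equations (\ref{paper1eq1115421})--(\ref{paper1eq111111784}) can be transcribed essentially verbatim after the sign flip.

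The routine part is checking that each family uses no vertex twice within a single color class and that every edge of $H_{m,j}^{-i,3}$ appears in exactly one class. This is guaranteed once I confirm (a) within a long diagonal class the sum of endpoints is constant, so two edges share a vertex only if they are equal; and (b) the ranges of the running parameter $k$ in the short-$k$ patches join continuously with the long-$k$ diagonals, with no overlap. Counting the total number of distinct classes in each case should yield exactly $i+1$, matching $\Delta(H_{m,j}^{-i,3})$ as computed from Theorem \ref{theorem12451}.

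The main obstacle will be the bookkeeping at the boundary values of $k$ (specifically $k=4,5,6,7,8$ where the diagonal template degenerates and explicit singleton/doubleton classes must be inserted) and making sure that the removal of $u_{-m}$ together with all edges bearing edge-sum $-m$, and similarly for $u_j$ and edge-sum $j$, is reflected in every color class: any edge of $G_{-i,3}$ whose endpoints sum to $-m$ or to $j$, and any edge incident to $u_{-m}$ or $u_j$, must be excluded throughout. Once the six case-by-case color lists are verified to satisfy these constraints, the conclusion $\chi'(H_{m,j}^{-i,3})=i+1<i+2=\chi'_{\mathbb{Z}-sum}(H_{m,j}^{-i,3})$ follows, and Definition \ref{paper2def3} gives that each such graph is a non-perfect edge sum color graph.
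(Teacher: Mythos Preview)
Your plan is correct and is exactly what the paper does: it applies the sign flip $u_\ell \mapsto u_{-\ell}$ to the six explicit color-class templates of Theorem \ref{paper2theorem13}, invokes Lemma \ref{paper1lemma50000} for the small-$i$ base cases, and concludes $\chi'(H_{m,j}^{-i,3})=i+1<i+2=\chi'_{\mathbb{Z}\text{-}sum}(H_{m,j}^{-i,3})$. One small slip in your sketch: the indexed family should read $\{(3,-k),\,(2,-(k-1)),\,(0,-(k-3)),\dots\}$ with positive leading labels, since the reflection of $(-3,k)$ is $(3,-k)$, not $(-3,-k)$.
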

\begin{proof}
By similar analysis, we have the edge sum chromatic index is $i+2.$ Now, we apply edge coloring on these graphs. We list below the independent color classes:\\
{\em Case A: $m=1$, $j=1$  and $i\geq8$:}
\begin{eqnarray}\label{paper1eq11154}
&&\nonumber \left\lbrace  (0,3), ~(-2,-(k-4)),~ (-3,-(k-5)), ~ \cdots, ~ \left(-\left\lfloor\frac{k-3}{2}\right\rfloor, -\left\lfloor\frac{k}{2}\right\rfloor\right) \right\rbrace,\\
&&\nonumber \left\lbrace (0,2), ~(-2,-(k-3)),~ (-3,-(k-4)), ~ \cdots, ~ \left(-\left\lfloor\frac{k-2}{2}\right\rfloor, -\left\lfloor\frac{k+1}{2}\right\rfloor\right) \right\rbrace,\\
&&\nonumber \left\lbrace (0,-(k-1)), ~(-2,-(k-2)),~ (-3,-(k-3)), ~ \cdots, ~ \left(-\left\lfloor\frac{k-1}{2}\right\rfloor, -\left\lfloor\frac{k+2}{2}\right\rfloor\right) \right\rbrace,\\
&&  \left\lbrace(0,-k),~ (-2,2),~(-3,3) \right\rbrace,  \left\lbrace ~(2,-k),~ (0,-(k-2)) \right\rbrace;\mbox{ where } k=i,\\
&&\nonumber \left\lbrace (3,-k), ~(2,-k-1), ~(0,-(k-3)) \right\rbrace; \mbox{where } k=5,6,7,\\
&&\nonumber \left\lbrace (3,-k), ~(2,-(k-1)),~(0,-(k-3)), ~(-2,-(k-5)),~ (-3,-(k-6)), ~\cdots,\right.\\
 &&\nonumber \hspace{3.5cm}\left. \left(-\left\lfloor\frac{k-4}{2}\right\rfloor, -\left\lfloor\frac{k-1}{2}\right\rfloor\right) \right\rbrace; ~\mbox{where } k=8,9, ~\cdots~ , i.
\end{eqnarray}
{\em Case B: $m=1$, $j=2$ and $i\geq8$:}
\begin{eqnarray}
&&\nonumber \left\lbrace  (0,3), ~(-2,-(k-4)),~ (-3,-(k-5)), ~ \cdots, ~ \left(-\left\lfloor\frac{k-3}{2}\right\rfloor, -\left\lfloor\frac{k}{2}\right\rfloor\right) \right\rbrace,
\end{eqnarray}
\begin{eqnarray}\label{paper1eq1111454}
&&\nonumber \left\lbrace (1,-(k-1)),~(0,-(k-2)), ~(-2,-(k-3)),~ (-3,-(k-4)), ~ \cdots, ~ \left(-\left\lfloor\frac{k-2}{2}\right\rfloor, -\left\lfloor\frac{k+1}{2}\right\rfloor\right) \right\rbrace,\\
&&\nonumber \left\lbrace (0,1), ~(-2,-(k-2)),~ (-3,-(k-3)), ~ \cdots, ~ \left(-\left\lfloor\frac{k-1}{2}\right\rfloor, -\left\lfloor\frac{k+2}{2}\right\rfloor\right) \right\rbrace,\\
&& \left\lbrace(0,-k), ~(-3,3) \right\rbrace,  \left\lbrace (3,-2),~(1,-k),~ (0,-(k-1)) \right\rbrace,\mbox{ where } k=i,\\
&&\nonumber \left\lbrace (3,-k),  ~(1,-(k-2)),~(0,-(k-3)) \right\rbrace; \mbox{ where } k=5,6,7,\\
&&\nonumber \left\lbrace (3,-k), ~(1,-(k-2)),~(0,-(k-3)), ~(-2,-(k-5)),~ (-3,-(k-6)), ~\cdots,\right.\\ &&\nonumber \hspace{3.5cm}\left. \left(-\left\lfloor\frac{k-4}{2}\right\rfloor, -\left\lfloor\frac{k-1}{2}\right\rfloor\right) \right\rbrace;~\mbox{ where } k=8,9, ~\cdots~ , i.
\end{eqnarray}
{\em Case C: $j=1$, $m=2$ and $i\geq10$:}
\begin{eqnarray}\label{paper1eq111132}
&&\nonumber \left\lbrace (0,3), ~(-1,-(k-3)),~ (-3,-(k-5)), ~ \cdots, ~ \left(-\left\lfloor\frac{k-3}{2}\right\rfloor, -\left\lfloor\frac{k}{2}\right\rfloor\right) \right\rbrace,\\
&&\nonumber \left\lbrace (0,2), ~(-1,-(k-2)),~ (-3,-k-4), ~ \cdots, ~ \left(-\left\lfloor\frac{k-2}{2}\right\rfloor, -\left\lfloor\frac{k+1}{2}\right\rfloor\right) \right\rbrace,\\
&&\nonumber \left\lbrace (0,-k), ~(-1,-(k-1)),~ (-3,-(k-3)), ~ \cdots, ~ \left(-\left\lfloor\frac{k-1}{2}\right\rfloor, -\left\lfloor\frac{k+2}{2}\right\rfloor\right) \right\rbrace,\\
&& \left\lbrace(0,-(k-1)),~(-3,3) \right\rbrace,  \left\lbrace ~(3,-1),~(2,-k),~ (0,-(k-2)) \right\rbrace;\mbox{ where } k=i,\\
&&\nonumber \left\lbrace (3,-4),~(2,-3),~ (0,-1) \right\rbrace,  \left\lbrace (3,-6),~(2,-5), ~(0,-3) \right\rbrace,\\
&&\nonumber \left\lbrace (3,-k), ~(2,-(k-1)), ~(0,-(k-3)),~(-1,-(k-4)) \right\rbrace; \mbox{ where } k=7,8,9,\\
&&\nonumber \left\lbrace (3,-k), ~(2,-(k-1)), ~(0,-(k-3)), ~(-1,-(k-4)),~ (-3,-(k-6)), ~\cdots,\right.\\ &&\nonumber \hspace{3.5cm}\left.\left(-\left\lfloor\frac{k-4}{2}\right\rfloor, -\left\lfloor\frac{k-1}{2}\right\rfloor\right) \right\rbrace;~\mbox{where } k=10,11, ~\cdots~ , i.
\end{eqnarray}
{\em Case D: $m=2$, $j=2$ and $i\geq10$:}
\begin{eqnarray}\label{paper1eq111145}
&&\nonumber \left\lbrace   (0,3), ~(-1,-(k-3)),~ (-3,-(k-5)), ~ \cdots, ~ \left(-\left\lfloor\frac{k-3}{2}\right\rfloor, -\left\lfloor\frac{k}{2}\right\rfloor\right) \right\rbrace,\\
&&\nonumber \left\lbrace (1,-k),~(0,-(k-1)), ~(-1,-(k-2)),~ (-3,-(k-4)), ~ \cdots, ~ \left(-\left\lfloor\frac{k-2}{2}\right\rfloor, -\left\lfloor\frac{k+1}{2}\right\rfloor\right) \right\rbrace,\\
&&\nonumber \left\lbrace (0,1), ~(-1,-(k-1)),~ (-3,-(k-3)), ~ \cdots, ~ \left(-\left\lfloor\frac{k-1}{2}\right\rfloor, -\left\lfloor\frac{k+2}{2}\right\rfloor\right) \right\rbrace,\\
&& \left\lbrace(0,-k),~(-1,1),~(-3,3) \right\rbrace,  \left\lbrace  (1,-(k-1)),~ (0,-(k-2)) \right\rbrace;\mbox{  where } k=i,\\
&&\nonumber \left\lbrace (3,-4), ~ (0,-1) \right\rbrace,  \left\lbrace (3,-6), ~ (1,-4),~(0,-3) \right\rbrace,\\
&&\nonumber \left\lbrace (3,-k), ~(1,-(k-2)),~(0,-(k-3)),~(-1,-(k-4)) \right\rbrace; \mbox{ where } k=7,8,9,\\
&&\nonumber \left\lbrace (3,-k), ~(1,-(k-2)),~(0,-(k-3)), ~(-1,-(k-4)),~ (-3,-(k-6)), ~\cdots,\right.\\ &&\nonumber \hspace{3.5cm} \left. \left(-\left\lfloor\frac{k-4}{2}\right\rfloor, -\left\lfloor\frac{k-1}{2}\right\rfloor\right) \right\rbrace;~\mbox{ where } k=10,11, ~\cdots~ , i.
\end{eqnarray}
{\em Case E: $m=3$,  $j=1$ and $i\geq12$:}
\begin{eqnarray}\label{paper1eq11111145}
&&\nonumber \left\lbrace   (0,3), ~(-1,-(k-3)),~ (-2,-(k-4)),~(-4,-(k-6)), ~ \cdots, ~ \left(-\left\lfloor\frac{k-3}{2}\right\rfloor, -\left\lfloor\frac{k}{2}\right\rfloor\right) \right\rbrace,\\
&&\nonumber \left\lbrace (0,2), ~(-1,-(k-2)),~ (-2,-(k-3)),~(-4,-(k-5)), ~ \cdots, ~ \left(-\left\lfloor\frac{k-2}{2}\right\rfloor, -\left\lfloor\frac{k+1}{2}\right\rfloor\right) \right\rbrace,\\
&&\nonumber \left\lbrace (0,-k), ~(-1,-(k-1)),~ (-2,-(k-2)),~(-4,-(k-4)), ~ \cdots, ~ \left(-\left\lfloor\frac{k-1}{2}\right\rfloor, -\left\lfloor\frac{k+2}{2}\right\rfloor\right) \right\rbrace,\\
&&   \left\lbrace(0,-(k-1)),~(-2,2) \right\rbrace,  \left\lbrace ~(3,-1),~(2,-k),~ (0,-(k-2)) \right\rbrace;\mbox{ where } k=i,\\
&&\nonumber \left\lbrace (3,-4),~ (0,-1) \right\rbrace,  \left\lbrace (3,-5),~(2,-4),~ (0,-2) \right\rbrace,  \left\lbrace (3,-7),~(2,-6),~(0,-4) \right\rbrace,\\
&&\nonumber \left\lbrace (3,-8),~(2,-7), ~(0,-5), ~(-1,-4) \right\rbrace,\\
&&\nonumber \left\lbrace (3,-k), ~(2,-(k-1)),  ~(0,-(k-3)),~(-1,-(k-4)),~(-2,-(k-5)) \right\rbrace; \mbox{where } k=9,10,11,\\
&&\nonumber \left\lbrace (3,-k), ~(2,-(k-1)),  ~(0,-(k-3)), ~(-1,-(k-4)),~ (-2,-(k-5)),~(-4,-(k-7)), ~\cdots \right.\\ 
&&\nonumber \hspace{3.5cm} ~\left. \cdots, \left(-\left\lfloor\frac{k-4}{2}\right\rfloor, -\left\lfloor\frac{k-1}{2}\right\rfloor\right) \right\rbrace; \mbox{ where } k=12,13, ~\cdots~ , i.
\end{eqnarray}
{\em Case F: $m=3$, $j=2$ and $i\geq12$:}
\begin{eqnarray}\label{paper1eq111111}
&&\nonumber \left\lbrace   (0,3), ~(-1,-(k-3)),~ (-2,-(k-4)),~(-4,-(k-6)), ~ \cdots, ~ \left(-\left\lfloor\frac{k-3}{2}\right\rfloor, -\left\lfloor\frac{k}{2}\right\rfloor\right) \right\rbrace,\\
&&\nonumber \left\lbrace (1,-(k-1)),~(0,-k), ~(-1,-(k-2)),~ (-2,-(k-3)),~(-4,-(k-5)), ~ \cdots,\right.\\
&& \nonumber \hspace{2cm}\left. \left(-\left\lfloor\frac{k-2}{2}\right\rfloor, -\left\lfloor\frac{k+1}{2}\right\rfloor\right) \right\rbrace,\\
&&\nonumber \left\lbrace (0,1), ~(-1,-(k-1)),~ (-2,-(k-2)),~(-4,-(k-4)), ~ \cdots, ~ \left(-\left\lfloor\frac{k-1}{2}\right\rfloor,- \left\lfloor\frac{k+2}{2}\right\rfloor\right) \right\rbrace,\\
&&  \left\lbrace(0,-(k-2)),~(-1,1) \right\rbrace, \left\lbrace(3,-2), ~(1,-k),~ (0,-(k-1)) \right\rbrace;\mbox{ where } k=i,\\
&&\nonumber \left\lbrace (3,-4),~(1,-2),~ (0,-1) \right\rbrace,  \left\lbrace (3,-5), ~ (0,-2) \right\rbrace,  \left\lbrace (3,-7), ~ (1,-5),~(0,-4) \right\rbrace,\\
&&\nonumber \left\lbrace (3,-8), ~ (1,-6),~(0,-5), ~(-1,-4) \right\rbrace,\\
&&\nonumber \left\lbrace (3,-k),  ~(1,-(k-2)),~(0,-(k-3)),~(-1,-(k-4)),~(-2,-(k-5)) \right\rbrace; \mbox{ where } k=9,10,11,\\
&&\nonumber \left\lbrace (3,-k),  ~(1,-(k-2)),~(0,-(k-3)), ~(-1,-(k-4)),~ (-2,-(k-5)),~(-4,-(k-7)), ~\cdots \right.\\ 
&&\nonumber \hspace{3.5cm} ~\left. \cdots, \left(-\left\lfloor\frac{k-4}{2}\right\rfloor, -\left\lfloor\frac{k-1}{2}\right\rfloor\right) \right\rbrace \mbox{ where } k=12,13, ~\cdots~ , i.
\end{eqnarray}
 Hence,  we have the edge chromatic number  
\begin{center}
$\chi'\left(H_{m,j}^{-i,3}\right)=i+1$ for all $i\geq 4 $, $m=1,2,3$ and $j=1,2.$
\end{center}
 Finally, we can conclude that the integral sum graph $H_{m,j}^{-i,3}$   is non-perfect edge sum color graphs for every values of  $i  \geq   4$, $m=1,2,3$ and $j=1,2$.
\end{proof}
\section{Numerical results}
Here, we place the numerical results to verify the derived theoretical results. Put $i=11$ in    Theorem \ref{paper2theorem90} to compute the edge chromatic index of $H^{-11,2}_{3,1}$. We assign the color to the edges of $H^{-11,2}_{3,1}$ in the following manner:
\begin{eqnarray*}
&& \lbrace(0,2),~(-1,-9),~(-2,-8),~(-4,-6)\rbrace \rightarrow \mbox{Blue},\\
&& \lbrace(0,-11),~(-1,-10),~(-2,-9),(-4,-7),(-5,-6) \rbrace \rightarrow \mbox{Ivory},  \lbrace(0,-10),~(-2,2)\rbrace \rightarrow \mbox{Brown},\\
&& \lbrace(0,-1) \rbrace \rightarrow \mbox{Orange}, \lbrace(2,-4),(0,-2) \rbrace \rightarrow \mbox{Grey},~
\lbrace(2,-6),(0,-4)\rbrace \rightarrow \mbox{Purple},\\
&& \lbrace(2,-7),(0,-5),(-1,-4)\rbrace \rightarrow \mbox{Green},  \lbrace(2,-8),(0,-6),(-1,-5),(-2,-4)\rbrace \rightarrow \mbox{Yellow},\\
&& \lbrace(2,-9),(0,-7),(-1,-6),(-2,-5)\rbrace \rightarrow \mbox{Pink}, 
\lbrace(2,-10),(0,-8),(-1,-7),(-2,-6)\rbrace \rightarrow \mbox{Red},\\
&&
\lbrace(2,-11),(0,-9),(-1,-8),(-2,-7),(-4,-5)\rbrace \rightarrow \mbox{Black}.
\end{eqnarray*}
Therefore, edge chromatic index  of $H^{-11,2}_{3,1}$ is $11$.  Similarly, for $i=12$, $m=3$ and $j=2$ from Theorem \ref{paper2theorem11}, we compute the edge color classes as follows:
\begin{eqnarray*}
&& \lbrace(0,3),~(-1,-9),~(-2,-8),~(-4,-6)\rbrace \rightarrow \mbox{Purple},\\
&& \lbrace(1,-11),~(0,-12),~(-1,-10),(-2,-9),(-4,-7),(-5,-6) \rbrace \rightarrow \mbox{Musturd},\\
&& \lbrace(0,1),(-1,-11),(-2,-10),(-4,-8),~(-5,-7)\rbrace \rightarrow \mbox{Cyan},  \lbrace(0,-10),(-1,1)  \rbrace \rightarrow \mbox{ivory}, \\
&&\lbrace(3,-2),(1,-12),(0,-11)  \rbrace \rightarrow \mbox{Maroon},\lbrace(3,-4),(1,-2),(0,-1)\rbrace \rightarrow \mbox{Olive green},\\
&& \lbrace(3,-5), (0,-2)\rbrace \rightarrow \mbox{Grey}, \lbrace(3,-7), (1,-5),(0,-4)\rbrace \rightarrow \mbox{Orange},\\
&& \lbrace(3,-8), (1,-6),(0,-5),(-1,-4)\rbrace \rightarrow \mbox{Blue},\\
&&
\lbrace(3,-9), (1,-7),(0,-6),(-1,-5),(-2,-4)\rbrace \rightarrow \mbox{Yellow},\\
&&
\lbrace(3,-10), (1,-8),(0,-7),(-1,-6),(-2,-5)\rbrace \rightarrow \mbox{Pink},\\
&&
\lbrace(3,-11), (1,-9),(0,-8),(-1,-7),(-2,-6)\rbrace \rightarrow \mbox{Red},\\
&&
\lbrace(3,-12), (1,-10),(0,-9),(-1,-8),(-2,-7),(-4,-5)\rbrace \rightarrow \mbox{Black}.
\end{eqnarray*}
Therefore, edge chromatic index of the graph $H^{-12,3}_{3,2}$ is $13$. In   figure \ref{paper1figure4}, we place the graphs of the edge coloring for $H^{-11,2}_{3,1}$ and $H^{-12,3}_{3,2}$.
\begin{figure}[H]
  \subfloat[$H^{-11,2}_{3,1}$]{
	\begin{minipage}[c][1\width]{
	   0.3\textwidth}
	   \centering
	   \includegraphics[width=1\textwidth]{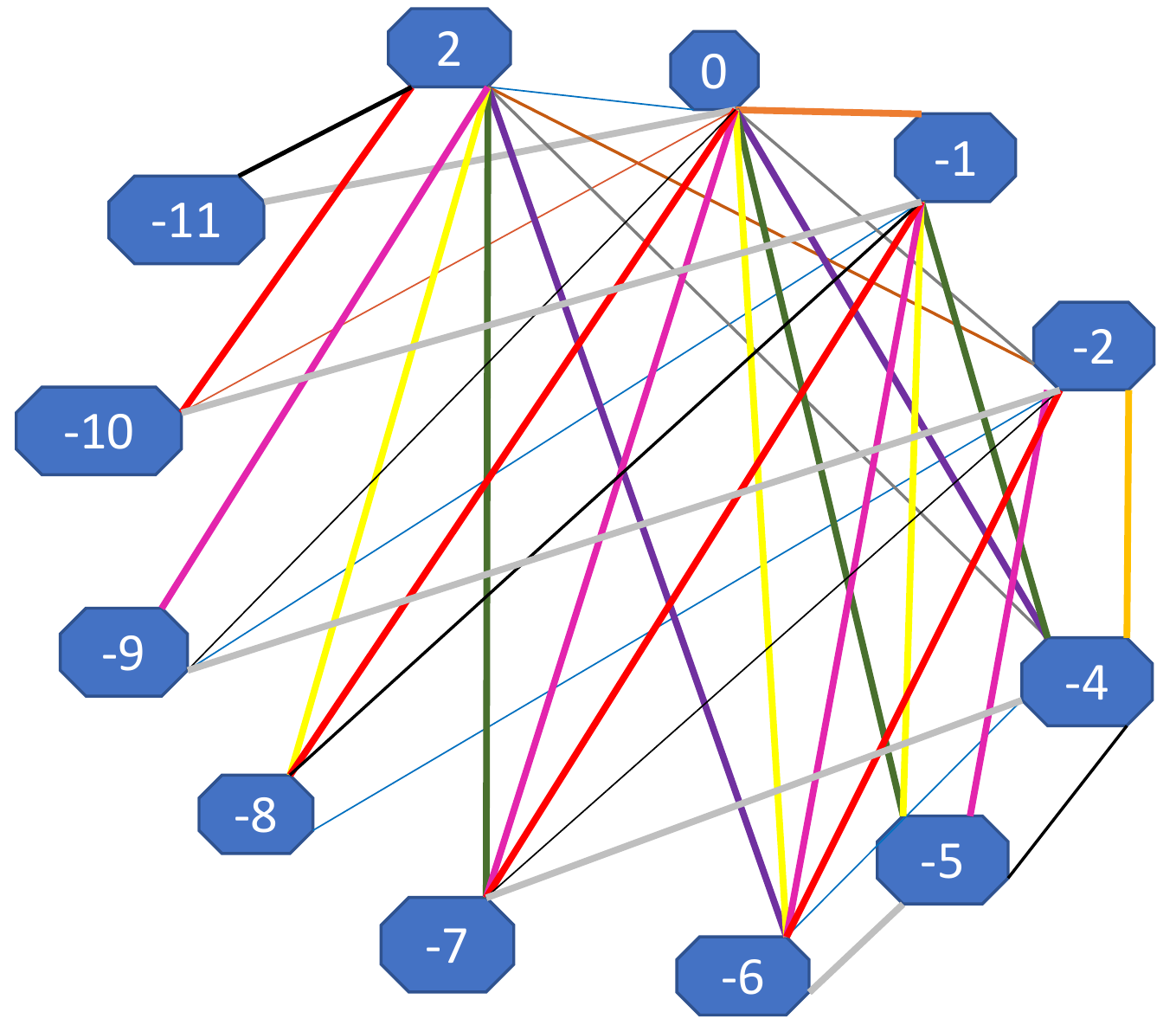}
	\end{minipage}}
 \hspace{3cm}	
  \subfloat[$H^{-12,3}_{3,2}$]{
	\begin{minipage}[c][1\width]{
	   0.3\textwidth}
	   \centering
	   \includegraphics[width=1\textwidth]{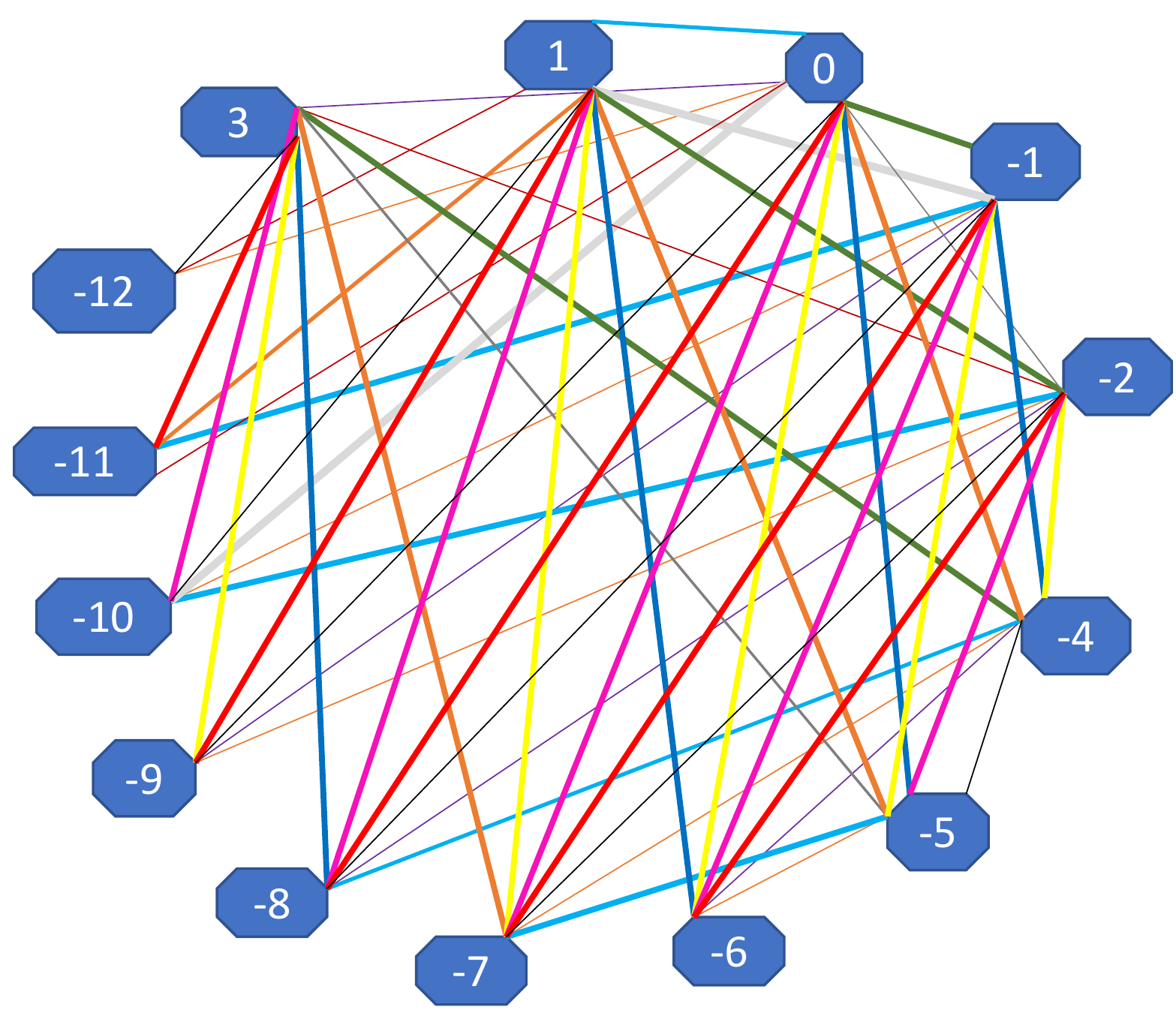}
	\end{minipage}}
\caption{Edge coloring}
\label{paper1figure4}
\end{figure}
Now, we choose $s=11$. By using Theorem \ref{0paper2theorem10}, we have the following color classes of $H^{-2,11}_{1,3}$:
\begin{eqnarray*}
&& \lbrace(0,-2),~(1,9),~(2,8),~(4,6)\rbrace \rightarrow \mbox{Blue},  \lbrace(0,11),~(1,10),~(2,9),(4,7),(5,6) \rbrace \rightarrow \mbox{Ivory},\\
&& \lbrace(0,10),~(-2,2)\rbrace \rightarrow \mbox{Brown},  \lbrace(0,1) \rbrace \rightarrow \mbox{Orange}, \lbrace(-2,4),(0,2) \rbrace \rightarrow \mbox{Grey},~
\lbrace(-2,6),(0,4)\rbrace \rightarrow \mbox{Purple},\\
&& \lbrace(-2,7),(0,5),(1,4)\rbrace \rightarrow \mbox{Green},  \lbrace(-2,8),(0,6),(1,5),(2,4)\rbrace \rightarrow \mbox{Yellow},\\
&& \lbrace(-2,9),(0,7),(1,6),(2,5)\rbrace \rightarrow \mbox{Pink}, 
\lbrace(-2,10),(0,8),(1,7),(2,6)\rbrace \rightarrow \mbox{Red},\\
&&
\lbrace(-2,11),(0,9),(1,8),(2,7),(4,5)\rbrace \rightarrow \mbox{Black}.
\end{eqnarray*}

So, edge chromatic index of the graph $H^{-2,11}_{1,3}$ is $11$.  Again, for $s=12$, $m=2$ and $j=3$ from Theorem \ref{paper2theorem13}, we compute the edge color classes as follows:
\begin{eqnarray*}
&& \lbrace(0,-3),~(1,9),~(2,8),~(4,6)\rbrace \rightarrow \mbox{Purple},  \lbrace(-1,11),~(0,12),~(1,10),(2,9),(4,7),(5,6) \rbrace \rightarrow \mbox{Musturd},\\
&& \lbrace(0,-1),(1,11),(2,10),(4,8),~(5,7)\rbrace \rightarrow \mbox{Cyan},
  \lbrace(0,10),(-1,1)  \rbrace \rightarrow \mbox{ivory}, \\
  &&\lbrace(-3,2),(-1,12),(0,11)  \rbrace \rightarrow \mbox{Maroon}, \lbrace(-3,4),(-1,2),(0,1)\rbrace \rightarrow \mbox{Olive green},\\
&& \lbrace(-3,5), (0,2)\rbrace \rightarrow \mbox{Grey},  \lbrace(-3,7), (-1,5),(0,4)\rbrace \rightarrow \mbox{Orange},\\
&& \lbrace(-3,8), (-1,6),(0,5),(1,4)\rbrace \rightarrow \mbox{Blue}, 
\lbrace(-3,9), (-1,7),(0,6),(1,5),(2,4)\rbrace \rightarrow \mbox{Yellow},\\
&&
\lbrace(-3,10), (-1,8),(0,7),(1,6),(2,5)\rbrace \rightarrow \mbox{Pink}, 
\lbrace(-3,11), (-1,9),(0,8),(1,7),(2,6)\rbrace \rightarrow \mbox{Red},\\
&&
\lbrace(-3,12), (-1,10),(0,9),(1,8),(2,7),(4,5)\rbrace \rightarrow \mbox{Black}.
\end{eqnarray*}
 Therefore, the edge chromatic index is $13$.  Hence, these are non perfect edge sum color graphs.  In figure \ref{paper1figure5}, we have plotted the edge coloring of these two graphs.
\begin{figure}[H]
  \subfloat[$H^{-2,11}_{1,3}$]{
	\begin{minipage}[c][1\width]{
	   0.3\textwidth}
	   \centering
	   \includegraphics[width=1\textwidth]{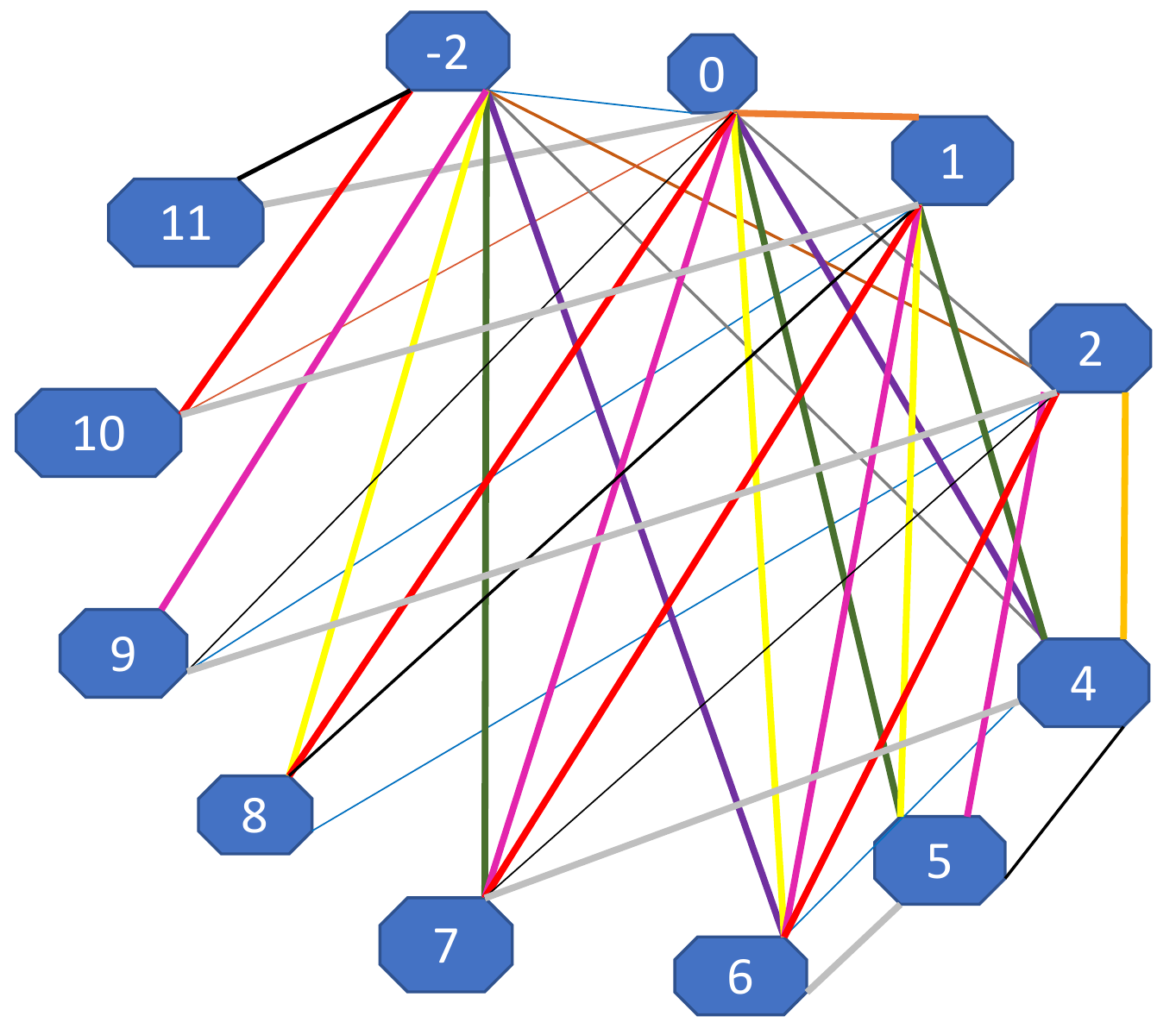}
	\end{minipage}}
 \hspace{4cm}	
  \subfloat[$H^{-3,12}_{2,3}$]{
	\begin{minipage}[c][1\width]{
	   0.3\textwidth}
	   \centering
	   \includegraphics[width=1\textwidth]{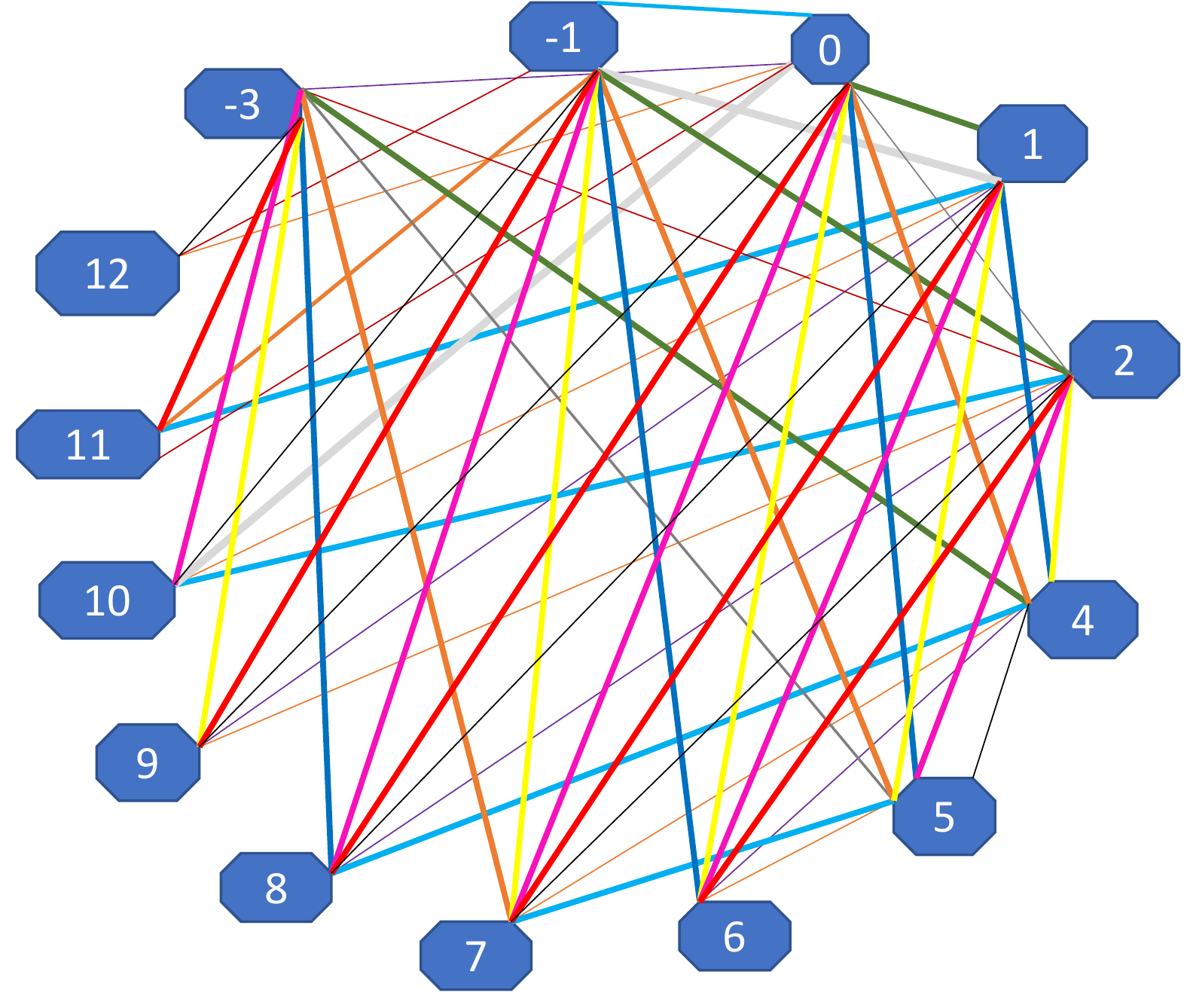}
	\end{minipage}}
\caption{Edge coloring}
\label{paper1figure5}
\end{figure}

\section{Conclusions}\label{P5Conclusions}
We considered the  families of integral sum graph   $H^{-i,s}_{m,j}$  for all $i,s, m,k\in \mathbb{N}$ such that $-i<0<s$. We successfully applied edge coloring and edge-sum coloring on these integral sum graphs. We derived the general formula of independent edge color classes for different values of $i$ and $s$. From the figures, we noticed that  no adjacent edges get the same color. We also compared these two techniques. We obvserved that edge coloring is more effective than edge sum coloring.    Numerical results guaranteed the validity of derived theoretical results.

\end{document}